\newtheorem{theorem}{Theorem}[section]
\newtheorem{lemma}[theorem]{Lemma}
\newtheorem{proposition}[theorem]{Proposition}
\newtheorem{remark}[theorem]{Remark}
\newtheorem{definition}{Definition}[section]
\numberwithin{equation}{section}
\newcommand{\h}{\eta}
\newcommand{\bbL}{\mathbb L}
\renewcommand{\epsilon}{\varepsilon}
\newcommand{\e}{\epsilon}
\newcommand{\R}{\mathbb R}
\newcommand{\Z}{\mathbb Z}
\newcommand{\N}{\mathbb N}
\renewcommand{\P}{\mathbb P}
\begin{document}
\title[]{Kinetically constrained lattice gases: tagged particle diffusion}
\author[O. Blondel ]{O. Blondel}
\email{blondel@math.univ-lyon1.fr}
\address{
Institut Camille Jordan CNRS-UMR 5208
B\^atiment Braconnier
Univ.\ Claude Bernard Lyon 1
43 boulevard du 11 novembre 1918
69622 Villeurbanne cedex}
\author[C. Toninelli]{C. Toninelli}
\email{cristina.toninelli@upmc.fr}
\address{Laboratoire de Probabilit\'es et Mod\`eles Al\'eatoires
  CNRS-UMR 7599 Univ.\ Paris VI-VII 4, Place Jussieu F-75252 Paris Cedex 05 France}
  \thanks{This work has been supported by the ERC Starting Grant 680275 MALIG
 }\begin{abstract}

 Kinetically constrained lattice gases (KCLG) are interacting particle systems on the integer lattice $\mathbb Z^d$ with   hard core
exclusion and 
 Kawasaki type dynamics. Their peculiarity  is that jumps are allowed only
if  the configuration satisfies a constraint 
which asks for enough empty sites in a certain local neighborhood.
KCLG have been introduced and extensively studied in physics literature as models of glassy dynamics.
We focus on the most studied class of KCLG, the Kob Andersen (KA) models. 
We analyze the behavior of a tracer (i.e.\@ a tagged particle) at equilibrium. We prove that for all dimensions $d\geq 2$
and for any equilibrium particle density, under diffusive rescaling the motion of the tracer converges to a $d$-dimensional Brownian motion with non-degenerate diffusion matrix. Therefore we disprove the occurrence of a diffusive/non diffusive transition  which had been conjectured  in physics literature. 
 Our technique is flexible enough and can be extended to analyse the tracer behavior for other choices of constraints.
\end{abstract}
\maketitle

{\sl MSC 2010 subject classifications}:{60K35}
{60J27}

{\sl Keywords}: Kawasaki dynamics, tagged particle, kinetically constrained models

\section{Introduction}

Kinetically constrained lattice gases (KCLG) are interacting particle systems on the integer lattice $\mathbb Z^d$ with hard core
exclusion, i.e.\@
with the constraint that on each site there is at most one particle.
A configuration is therefore defined by giving for each site
$x\in\mathbb Z^d$ the occupation variable $\eta(x)\in \{0,1\}$, which
represents an empty or occupied site respectively.
The dynamics is given by a continuous time Markov process of Kawasaki type, which allows 
the exchange of the occupation variables across a bond  $e=(x,y)$  of
neighboring sites $x$ and $y$ with a rate $c_{x,y}(\eta)$ depending on
the configuration $\h$.   
The simplest case is the simple symmetric  exclusion process
(SSEP) in which a jump of a particle to a neighboring empty site occurs at rate one, namely
 $c_{x,y}^{{SSEP}}(\h)=(1-\eta(x))\eta(y)+\eta(x)(1-\eta(y))$. 
Instead, for KCLG  the jump to a neighboring empty site can occur only if the configuration satisfies a certain local constraint 
which involves the occupation variables on other sites besides the initial and final position of the particle. More precisely $c_{x,y}(\eta)$ is of the form 
$c_{x,y}^{SSEP}r_{x,y}(\eta)$ where $r_{x,y}(\eta)$  degenerates to zero for certain choices of $\{\eta(z)\}_{z\in\mathbb Z^d\setminus \lbrace x,y\rbrace}$. 
Furthermore $r_{x,y}$ does 
not depend on the value of $\eta(x)$ and $\eta(y)$ and therefore
detailed balance w.r.t.\@ $\rho$-Bernoulli product
measure $\mu_{\rho}$ is verified for any $\rho\in[0,1]$. Therefore $\mu_{\rho}$ is an
invariant reversible measure for the process. However, at variance with 
the simple symmetric exclusion process, KCLG have several
other invariant measures. This is related to the fact that due to the degeneracy of $r_{x,y}(\eta)$
there exist \emph{blocked configurations}, namely configurations for which all
exchange rates are equal to zero.\\

\noindent KCLG have been introduced in physics literature (see \cite{Ritort,GST} for a
review) to model the liquid/glass transition that occurs when a liquid is suddenly cooled.
In
particular they were devised to mimic the fact that the motion of a
molecule in a low temperature (dense) liquid can be inhibited by the geometrical
constraints created by the surrounding molecules.  Since the exchange rates are devised to
encode this local caging mechanism, they require 
a minimal number of empty sites
in a certain neighborhood of $e=(x,y)$ in order for the exchange at $e$
to be allowed. There exists also a non-conservative version of KCLG, the so called Kinetically Constrained Spin Models, which feature a Glauber type dynamics and have been recently studied in several works (see e.g.\@ \cite{CMRT,orianediff} and references therein).

Let us start by recalling some fundamental issues which, due to the fact that the jump to a neighboring empty site is not always allowed,  require for KCLG 
different techniques
from those used to study SSEP. 
A first basic question is whether the infinite volume process is
ergodic, namely whether zero is a simple eigenvalue for the
generator of the Markov process in $\bbL_2(\mu_{\rho})$. 
This would in turn imply relaxation to  
$\mu_{\rho}$ in the $\bbL_2(\mu_{\rho})$ sense. Since the constraints require a minimal number of empty sites,  it is possible that the process undergoes a
transition from an ergodic to a non ergodic regime at $\rho_c$ with
$0<\rho_c<1$. 
  The
next natural issue is to establish the large time behavior of the
infinite volume process in the  ergodic regime, when we start from equilibrium measure at time
zero. This in turn is related to  the scaling with the
system size of the spectral gap and of the inverse of the log Sobolev
constant on a finite volume. Recall that
for SSEP decay to equilibrium occurs as $1/t^{d/2}$ and both the 
spectral gap and the inverse of the log Sobolev constant decay as $1/L^2$ uniformly in
the density $\rho$ \cite{quastel,yau}, where $L$ is the linear size of the finite volume. Numerical simulations for some KCLG suggest the possibility of an anomalous slowing down at high density \cite{KA,MP} which could correspond to a scaling of the spectral gap and of the log Sobolev constant different from SSEP. 
Two other natural issues are the evolution of macroscopic density profiles, namely the
study of the hydrodynamic limit, and the large time behavior of a tracer particle under a diffusive rescaling. 
For SSEP and $d\geq 2$ the tracer particle converges to a Brownian motion \cite{spohn}, more precisely the rescaled position of the tracer at time $\epsilon^{-2} t$ converges as $\epsilon\to 0$, to a $d$-dimensional Brownian motion with non-degenerate diffusion matrix. Instead, for some KCLG it has been conjectured that a diffusive/non-diffusive transition  occurs at a finite critical density $\rho_c<1$:  the self-diffusion matrix would be non-degenerate only for $\rho<\rho_c$  \cite{KA,kurchan2}. Concerning the hydrodynamic limit, the following holds for SSEP:
starting from an initial condition that has
a density profile and under a diffusive rescaling, there is a density profile at later times  and it  can be obtained from the initial
one by solving  the heat
equation \cite{S}.  For KCLG a natural candidate for the
hydrodynamic limit is a parabolic equation of porous media type
degenerating when the density approaches one. Establishing this result in presence of constraints is particularly challenging. \\

In order to recall the previous results on KCLG and to explain the novelty of our results, we should distinguish among {\sl cooperative} and {\sl non-cooperative} KCLG.
 A
model is said to be non-cooperative if its constraints are such that it is
possible to construct a proper finite group of vacancies, {\sl the
  mobile cluster}, with the following two properties: 
(i) for any configuration it is possible to move the mobile cluster to
  any other position in the lattice by a sequence of allowed exchanges;  (ii)
 any nearest neighbor exchange is allowed if the mobile cluster is in a proper position in its
vicinity.
All models which are not non-cooperative are said to be
cooperative. From the point of view of the modelization of the liquid/glass
transition, cooperative models are the most relevant ones.  Indeed, very
roughly speaking, non cooperative models are expected to behave like a
rescaled SSEP with the mobile cluster playing the role of a single
vacancy and are less suitable to describe the rich
behavior of glassy dynamics. Furthermore, from a mathematical point of view, cooperative models are much more challenging. Indeed, for non-cooperative models the existence of finite mobile clusters simplifies the analysis and allows the application of some standard techniques (e.g.\@ paths arguments) already developed for SSEP. \\

We can now recall 
the existing mathematical results for KCLG.\\
\noindent
{\sl Non-cooperative models.}
Ergodicity in infinite volume at any $\rho<1$ easily follows from the fact that with probability one there exists a mobile cluster and using path arguments (see for example \cite{bertini-toninelli}).
 In \cite{bertini-toninelli} it is proven in certain cases that both the inverse of the spectral gap and the log Sobolev
constant in finite volume of linear size $L$ with boundary sources \footnote{Namely with the addition of Glauber birth/death terms at the boundary} scale as $O(L^2)$. Furthermore for the same models
the self-diffusion matrix of the tagged particle
is proved to be non-degenerate \cite{bertini-toninelli}. The diffusive scaling of the spectral gap has been proved also for some models without boundary sources in \cite{nagahata}.  Finally, the hydrodynamic limit has been successfully analyzed for a special class constraints in \cite{GLT}. In all these cases the 
macroscopic density evolves under diffusive rescaling according to a
porous medium equation of the type $\partial_t\rho(t,u)=\nabla( D \nabla 
\rho)$ with $D(\rho)=(1-\rho)^m$ and $m$ an integer parameter. \vspace{0.15 cm}

\noindent
\emph{Cooperative models.}
The class of cooperative models  which has been most studied in physics literature are the so-called {\sl Kob
Andersen (KA) models}  \cite{KA}. 
KA actually denotes a class of models on
$\mathbb Z^d$ characterized by an integer parameter $s$ with $s\in[2,d]$. The  nearest neighbor exchange rates  are defined as follows: $c_{x,y}=c_{x,y}^{SSEP}r_{x,y}(\eta)$ 
with $r_{x,y}=1$ if at least $s-1$ neighbors of $x$ different from
$y$ are empty and at least $s-1$ neighbors of $y$ different from $x$
are empty too, $r_{x,y}=0$ otherwise. In other words, a particle is allowed to jump to a neighboring empty site iff it has at least $s$ empty neighbors both in its initial and final position. 
Hence $s$ is called the {\sl facilitation parameter}. The choices $s=1$ and $s>d$ are discarded for the following reasons: $s=1$ coincides with SSEP, while for $s>d$ at any density the model is not ergodic \footnote{This follows from the fact that if $s>d$  there exists finite clusters of particles which are blocked. For example for $s=3,d=2$ if there is a  $2\times 2$ square fully occupied by particles all these particles can never jump to their neighboring empty position.}.
It is immediate to verify that KA is a cooperative model
for all  $s\in[2,d]$. For example if $s=d=2$ a fully
occupied double column which spans the lattice can never be
destroyed. Thus no finite cluster of vacancies can be mobile since
it cannot overcome the double column. 
 In \cite{TBF} it has
been proven that for all $s\in[2,d]$ the infinite volume process
is ergodic at any finite density, namely $\rho_c=1$, thus disproving previous conjectures \cite{KA,kurchan2,parisi} on the occurrence of an ergodicity breaking transition.
In \cite{CMRT2} a technique has been devised to analyze the spectral gap of
cooperative KCLG on finite volume with boundary sources. In particular, for KA model with $d=s=2$ it has been proved that in a box of linear size $L$ with
boundary sources, the spectral gap scales as $1/L^2$ (apart from logarithmic corrections)
 at any density. By using this result  it is proved that, again for the choice $d=s=2$, the infinite volume time auto-correlation of
local functions decays as $1/t$   (modulo logarithmic corrections) \cite{CMRT2}.
 The technique of \cite{CMRT2} can be extended to prove for all choices of $d$ and $s\in[2,d]$ a diffusive scaling for the spectral gap and a decay of the correlation at least as $1/t$. A lower bound as $1/t^{d/2}$ follows by comparison with SSEP.

\noindent In the present paper we 
analyze the behavior of a tracer (also called tagged particle) for KA models at equilibrium, namely when the infinite volume system is initialized with $\rho$-Bernoulli measure. We prove (Theorem \ref{mainth}) that  for all $d$, for any choice of $s\in[2,d]$ and for any $\rho<1$, under diffusive scaling the motion of the tracer converges to a $d$-dimensional Brownian motion with non-degenerate diffusion matrix.
Our result disproves the occurrence of a diffusive/non diffusive transition which had been conjectured  in physics literature on the basis of numerical simulations \cite{KA,kurchan2}.  Positivity of the  self-diffusion matrix at  any $\rho<1$ had been later claimed in \cite{BiTo}. However, the results in \cite{BiTo} do not provide a full and rigorous proof of the  positivity of the  self-diffusion matrix. Indeed, they rely on a comparison with the behavior of certain random walks in a random environment which is not exact. 
We follow here a novel route, different from the heuristic arguments sketched in \cite{BiTo}, which allows us to obtain the first rigorous proof of positivity of the self diffusion matrix for a cooperative KCLG. In particular we prove that positivity holds for any $\rho<1$ for all KA models. Our 
technique is flexible enough and can be extended to analyze other cooperative models in the ergodic regime.\\

\noindent The plan of the paper follows.
In Section \ref{model}, after setting  the relevant notation, we introduce KA models and state our main result (Theorem \ref{mainth}).
In Section \ref{ergodicity} we recall some basic properties of KA models: ergodicity  at any $\rho<1$ (Proposition \ref{ergodicity}); the existence of a finite critical scale above which with large probability a configuration  on finite volume can be connected to a {\sl framed} configuration, namely a configuration with empty boundary.
In Section \ref{sec:aux} we 
introduce  an auxiliary diffusion process which corresponds to a random walk on the infinite component of a certain percolation cluster. Then we prove that this auxiliary process has a non-degenerate diffusion matrix  (Proposition \ref{prop:auxiliary}). In Section \ref{comparison} we prove via path arguments  that the diffusion matrix of KA is lower bounded by the one for the  auxiliary process (Theorem \ref{Theo:compa}). This allows to conclude that the self diffusion matrix for KA model is non-degenerate.

\section{Model and results}\label{model}

The models considered here are defined on the integer lattice $\mathbb Z^d$
with sites $x = (x_1,\dots,x_d)$ and basis vectors $
e_1=(1,\dots,0)$, $ e_2=(0,1,\dots,0),\dots$, $
e_d=(0,\dots,1)$.  Given  $x$ and $y$ in $\mathbb Z^d$ we write $x\sim y$ if they are nearest neighbors, namely 
$d(x,y)=1$ where $d(\cdot,\cdot)$ is the distance associated with the Euclidean norm.
Also, given a finite set $\Lambda\subset \mathbb Z^d$ we define its neighborhood $\partial\Lambda$ as the set of sites outside $\Lambda$ at distance one and its interior neighborhood $\partial_-\Lambda$ as the set of sites inside $\Lambda$ at distance one from $\Lambda^c$, namely
$$\partial\Lambda:=\{x\not\in\Lambda:\exists y\in \Lambda {\mbox{ s.t. }} d(x,y)=1\}$$
$$\partial_-\Lambda:=\{x\in\Lambda:\exists y\in \Lambda^c {\mbox{ s.t. }} d(x,y)=1\}$$
We denote by $\Omega$ the configuration space, $\Omega=\{0,1\}^{\mathbb Z^d}$ and by the greek letters $\eta,\xi$ the configurations. Given $\eta\in\Omega$ we let $\eta(x)\in\{0,1\}$  be the occupation variable at site $x$.
We fix a parameter $\rho\in[0,1]$ and we denote by $\mu$ the $\rho$-Bernoulli product measure.
Finally, given $\eta\in\Omega$ for any bond $e=(x,y)$ we denote by $\eta^{xy}$ the configuration obtained from $\eta$ by exchanging the occupation variables at $x$ and $y$, namely 
$$
\eta^{xy} (z): = \left\{
\begin{array} {ll}
\eta(z) & \mbox{if } z \notin \{x,y\} \\
\eta(x) & \mbox{if } z=y\\
\eta(y) & \mbox{if } z=x.
\end{array}
\right.
$$
The Kob-Andersen (KA) models are  interacting particle systems with Kawasaki type (i.e.\@ conservative) dynamics on the lattice $\Z^d$ depending on a parameter $s\leq d$ (the facilitation parameter) with $s\in[2,d]$. They are Markov processes  defined through the generator which acts on local functions $f:\Omega\to \mathbb R$ as
\begin{equation}\label{Lenv}
L_{\rm env}f(\xi)=\sum_{x\in\Z^d}\sum_{y\sim x} c_{xy}(\xi)[f(\xi^{xy})-f(\xi)],
\end{equation}
where 
\begin{equation}\label{constraint}
c_{xy}(\xi)=\left\{\begin{array}{ll}
 1 & \text{if }\xi(x)=1,\ \xi(y)=0,\ \displaystyle\sum_{z\sim y}(1-\xi(z))\geq s-1\text{ and }\sum_{z\sim x}(1-\xi(z))\geq s,\\
0&\text{else}. 
\end{array}\right.
\end{equation}
where here and in the following with a slight abuse of notation  we let $\sum_{z\sim y}$ be the sum over sites $z\in \mathbb Z^d$ with $z\sim y$.
In words, each couple of neighboring sites $(x,y)$  waits an independent mean one exponential time and then the values $\eta(x)$ and $\eta(y)$ are exchanged provided : either (i) there is a particle at $x$ and an empty site at $y$ and at least $s-1$ nearest neighbors for $y$ and at least $s$ nearest neighbors for $x$ or (ii) there is a particle at $y$ and an empty site at $x$ and at least $s$ nearest neighbors for $y$ and at least $s-1$ nearest neighbors for $x$.
We call the jump of a particle from $x$ to $y$ \emph{allowed} if $c_{xy}(\xi)=1$. For any $\rho\in (0,1)$, the process is reversible w.r.t.\@ $\mu$, the product Bernoulli measure of parameter $\rho$.

We consider a tagged particle in a KA system at equilibrium. More precisely, we consider the joint process $(X_t,\xi_t)_{t\geq 0}$ on $\Z^d\times\lbrace 0,1\rbrace^{\Z^d}$ with generator \begin{eqnarray}
\mathcal{L}f(X,\xi)&=&\sum_{y\in\Z^d\setminus\lbrace X\rbrace}\  \sum_{z\sim y} c_{yz}(\xi)[f(X,\xi^{yz})-f(X,\xi)]\\
&&+\,\sum_{y\sim X}c_{Xy}(\xi)[f(y,\xi^{Xy})-f(X,\xi)]
\end{eqnarray}
and initial distribution $\xi_0\sim\mu_0:=\mu(\cdot|\xi(0)=1), X_0=0$. Here and in the rest of the paper, we denote for simplicity by $0$ the origin, namely site $x\in\mathbb Z^d$ with $e_i\cdot x=0$  $\forall i\in\lbrace 1,\dots,d\rbrace$.

In order to study the position of the tagged particle, $(X_t)_{t\geq 0}$, it is convenient to define the process of the environment seen from the tagged particle $(\eta_t)_{t\geq 0}:=(\tau_{X_t}\xi_t)_{t\geq 0}$, where $(\tau_x\xi)(y)=\xi(x+y)$. This process is Markovian, has generator
\begin{eqnarray}\label{L}
Lf(\eta)=\sum_{y\in\Z^d\setminus\lbrace 0\rbrace}\  \sum_{z\sim y} c_{yz}(\eta)[f(\eta^{xy})-f(\eta)]+\sum_{y\sim 0}c_{0y}(\eta)[f(\tau_{y}(\eta^{0y}))-f(\eta)]
\end{eqnarray}
and is reversible w.r.t.\@ $\mu_0$. We still say that the jump of a particle from $x$ to $y$ is an \emph{allowed move} if $c_{xy}(\eta)=1$. In the case $x=0$, this jump in fact turns $\eta$ into $\tau_{y}(\eta^{0y})$.
By using the fact that the process seen from the tagged particle is ergodic at any $\rho<1$  (see Proposition \ref{ergoprop}) we can apply a classic result \cite{spohn} and obtain the following.
\begin{proposition}\cite{spohn}\footnote{This result is proved in \cite{spohn} for exclusion processes on $\Z^d$ but the proof also works in our setting.}
\label{varD}
For any $\rho\in(0,1)$, there exists a non-negative $d\times d$ matrix $D(\rho)$ such that
\begin{equation}
\e X_{\e^{-2}t}\underset{\e\rightarrow 0}{\longrightarrow}\sqrt{2D(\rho)}B_t,
\end{equation}
where $B$ is a standard $d$-dimensional Brownian motion and the convergence holds in the sense of
weak convergence of path measures on $D([0,\infty),\R^d)$. Moreover, the matrix $D(\rho)$ is characterized by
\begin{multline}\label{varformula}
u\cdot D(\rho)u=\inf_{f}\left\{\sum_{y\in\Z^d\setminus\lbrace 0\rbrace}\  \sum_{z\sim y} \mu_0\left(c_{yz}(\eta)[f(\eta^{xy})-f(\eta)]^2\right)\right.\\
\qquad+\left.\sum_{y\sim 0}\mu_0\left(c_{0y}(\eta)[u\cdot y+f(\tau_{y}(\eta^{0y}))-f(\eta)]^2\right)\right\}
\end{multline}
for any $u\in\R^d$, where the infimum is taken over local functions $f$ on $\lbrace 0,1\rbrace^{\Z^d}$.
\end{proposition}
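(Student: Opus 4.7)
The plan is to follow the Kipnis--Varadhan--Spohn approach for tagged particles in reversible exclusion processes. That approach requires only two structural facts about the environment process $(\eta_t)$: reversibility with respect to $\mu_0$ (built into the construction of $L$) and ergodicity (Proposition \ref{ergoprop}); nothing else in it uses unconstrained jumps.

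The first step is to decompose the scalar projection $u\cdot X_t$, for fixed $u\in\R^d$, into an additive functional of the environment plus a martingale. Writing $N^y_t$ for the number of allowed tagged-particle jumps in direction $y\sim 0$ up to time $t$, and using that $N^y_t$ has compensator $\int_0^t c_{0y}(\eta_s)\,ds$, one obtains
\begin{equation*}
u\cdot X_t=\int_0^t V_u(\eta_s)\,ds+M^u_t,\qquad V_u(\eta):=\sum_{y\sim 0}(u\cdot y)\,c_{0y}(\eta),
\end{equation*}
with $M^u_t$ a martingale of predictable bracket $\int_0^t\sum_{y\sim 0}(u\cdot y)^2c_{0y}(\eta_s)\,ds$. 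The measure $\mu_0$ and each $c_{0y}$ are invariant under coordinate reflections fixing $0$, and a short computation gives $V_u\circ\sigma=V_{\sigma u}$ for such a reflection $\sigma$; taking $\sigma$ to flip one coordinate of $u$ then shows $\mu_0(V_u)=0$.

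Next, I would combine the Kipnis--Varadhan CLT for the centered additive functional $\int_0^t V_u(\eta_s)\,ds$ with a martingale CLT for $M^u$ --- whose bracket divided by $t$ converges $\mu_0$-a.s.\ by the ergodic theorem --- to get the weak convergence of $\varepsilon u\cdot X_{\varepsilon^{-2}t}$ to a centered Gaussian with variance $t(u\cdot D(\rho)u)$, and polarize in $u$ to obtain the matrix limit; tightness on $D([0,\infty),\R^d)$ is standard. To identify $D(\rho)$, I would use the resolvent method: let $\phi_\lambda$ solve $(\lambda-L)\phi_\lambda=V_u$ and apply Dynkin's formula to $u\cdot X_t+\phi_\lambda(\eta_t)$. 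The resulting martingale $\widetilde M^\lambda$ has predictable bracket equal, per unit time, to the bracketed expression in \eqref{varformula} evaluated at $f=-\phi_\lambda$: jumps from $y\ne 0$ only change $\phi_\lambda$, giving the first sum, while jumps from $0$ change $u\cdot X$ by $u\cdot y$ and turn $\eta$ into $\tau_y\eta^{0y}$, giving the second sum. Sending $\lambda\to 0^+$, exploiting $\lambda\phi_\lambda\to 0$ in $L^2(\mu_0)$ by ergodicity, and using density of local functions among the resolvent images, upgrades the limit into the infimum over local $f$.

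The main technical hurdle in executing this is the $H^{-1}$-control $\|V_u\|_{-1}^2<\infty$ required by the Kipnis--Varadhan theorem. This is however immediate from the variational formula itself: the trivial test $f\equiv 0$ in \eqref{varformula} bounds $u\cdot D(\rho)u$ by $\sum_{y\sim 0}(u\cdot y)^2\mu_0(c_{0y})\leq 2d|u|^2$, showing finiteness regardless of how severely the constraints degenerate. So the Kipnis--Varadhan--Spohn machinery indeed applies verbatim, and the truly KCLG-specific issue --- \emph{positivity} of $D(\rho)$ --- is not addressed by this proposition but is the subject of the main theorem proved later in the paper.
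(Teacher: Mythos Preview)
The paper does not actually prove this proposition: it is stated with a citation to Spohn and a footnote asserting that the proof there carries over verbatim, relying on the ergodicity established in Proposition~\ref{ergoprop}. Your proposal is therefore more detailed than the paper's own treatment, and your sketch of the Kipnis--Varadhan--Spohn machinery is accurate; in particular you correctly isolate reversibility with respect to $\mu_0$ and ergodicity as the only structural inputs, which is exactly the point of the paper's footnote.

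One small presentational quibble: your justification of the $H^{-1}$ bound (``immediate from the variational formula itself'') reads as slightly circular, since the variational formula is part of what is being proved. The underlying mathematics is fine --- in Spohn's setup the $H^{-1}$ condition on the drift $V_u$ is verified directly by a Schwarz-type bound against the Dirichlet form, and this is morally the same computation as plugging $f\equiv 0$ into the right-hand side of \eqref{varformula} --- but it would be cleaner to phrase the check as an a priori estimate on $\|V_u\|_{-1}$ rather than as a consequence of the formula you are deriving.
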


Our main result is the following.
\begin{theorem}
\label{mainth}Fix an integer $d$ and $s\in[2,d]$ and consider the KA model on $\mathbb Z^d$ with facilitation parameter $s$. Then, for any $\rho\in(0,1)$, any $i=1,\ldots,d$, we have $e_i\cdot D(\rho)e_i>0$.
In other words, the matrix $D(\rho)$ is non-degenerate at any density.
\end{theorem}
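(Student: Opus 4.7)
The plan is to start from the variational formula \eqref{varformula} and produce, for each $u=e_i$, a strictly positive lower bound for $u\cdot D(\rho)u$ that holds uniformly in the local test function $f$. Attacking the infimum directly is hopeless in the high density regime since typical KA exchange rates vanish; I therefore follow an indirect route by introducing an auxiliary reversible dynamics that is both tractable and dominated by KA at the level of Dirichlet forms. Concretely, the aim is to prove non-degeneracy of a simpler \emph{auxiliary} diffusion matrix $D_{\rm aux}(\rho)$ (Proposition \ref{prop:auxiliary}) and then the comparison $u\cdot D(\rho)u \geq \kappa(\rho,s)\, u\cdot D_{\rm aux}(\rho) u$ with $\kappa(\rho,s)>0$ (Theorem \ref{Theo:compa}), which together imply the theorem.

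The auxiliary process is built as follows. Fix a scale $L=L(\rho,s)$ larger than the finite critical scale of Section \ref{ergodicity}, so that $L$-boxes are with high probability connectable to a framed configuration. Declare a site $x$ to be \emph{good} if the environment inside the $L$-box centered at $x$ contains a prescribed ``mobile super-cluster'' for KA, namely a local pattern of vacancies which enables a finite sequence of allowed KA exchanges that displaces a particle at $x$ to a prescribed neighbouring site while leaving the complement of the $L$-box unchanged. The set of good sites is translation invariant with positive density and, by choosing $L$ large enough, percolates on $\Z^d$. The auxiliary process is then the random walk of the tagged particle on the infinite good cluster, with jump rates induced by $\mu_0$. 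Non-degeneracy of $D_{\rm aux}(\rho)$ follows from an invariance principle for symmetric random walks on supercritical percolation clusters (De Masi--Ferrari--Goldstein--Wick type), combined with the reversible, translation-invariant structure of the environment seen from the walker.

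For the comparison, each elementary auxiliary move is, by construction, realisable through an explicit finite sequence of allowed KA exchanges. A standard moving-particle / Diaconis--Stroock type path argument then bounds the Dirichlet form of the auxiliary process by that of KA, with a constant depending on the length of the canonical paths and on the congestion, i.e.\ the number of canonical paths sharing a given KA transition, weighted by $\mu_0$. Since the canonical paths are all supported in translates of a fixed $L$-box, both quantities reduce to combinatorial counts in that box together with a uniform positive lower bound on the rates along the paths; this yields $\kappa(\rho,s)>0$ and hence $e_i\cdot D(\rho) e_i>0$.

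The main obstacle is precisely this comparison step. Because KA is cooperative, a single auxiliary displacement corresponds to a long and intricate sequence of KA exchanges manipulating many particles inside the $L$-box, and the same KA transition can appear in canonical paths associated with many different auxiliary moves and many different environments. Controlling the congestion uniformly in the configuration, while keeping the constant $\kappa(\rho,s)$ strictly positive for \emph{every} $\rho<1$, requires a careful choice of the super-cluster pattern and of the scale $L(\rho,s)$ as a function of $\rho$. This, together with the proof that the chosen good sites indeed percolate for all subcritical densities of vacancies, is where the bulk of the technical work is expected to lie.
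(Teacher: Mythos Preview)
Your strategy is exactly the one the paper follows: build an auxiliary random walk on a supercritical percolation cluster at scale $L(\rho)$ and then compare Dirichlet forms with KA via canonical paths, concluding by Proposition~\ref{prop:auxiliary} and Theorem~\ref{Theo:compa}.

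One point deserves care, and it is precisely where the paper's implementation differs from yours. You define a site to be good when the $L$-box contains a ``prescribed mobile super-cluster'', i.e.\ a fixed pattern of vacancies that can carry the tracer. For cooperative models this is the wrong primitive: KA with $s\geq 2$ has \emph{no} finite mobile cluster (this is what cooperative means), so no fixed pattern will do the job uniformly over environments. The paper instead declares a renormalized \emph{bond} of $(L+2)\Z^d$ open when the surrounding block neighbourhood satisfies a \emph{frameability} condition (Lemma~\ref{frameability}): each liaison tube contains a zero and each higher-dimensional block is frameable for a lower-parameter KA. This is a property of the whole configuration in the block, not a fixed pattern, and its probability tends to $1$ as $L\to\infty$ for every $\rho<1$. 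The canonical paths (Lemmata~\ref{moves1}--\ref{moves5}) then exploit the framed boundaries hierarchically rather than shuttling a mobile cluster. Apart from this, your outline of the congestion bound and of the dMFGW-type invariance principle matches the paper; note only that the resulting percolation is finitely dependent rather than i.i.d., so positivity of $D_{\rm aux}$ requires an extra embedding step (the $\hat\mu$ construction in the proof of Proposition~\ref{prop:auxiliary}).
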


\begin{remark}
Since the constraints are monotone in $s$ (the facilitation parameter), it is enough to prove the above result for $s=d$. \emph{From now on we assume $s=d$.}
\end{remark}

\section{Ergodicity, frameability and characteristic lengthscale}
\label{ergodicity}

In this section we recall some key results for KA dynamics.
In \cite{CMRT2}, following the arguments of \cite{TBF}, it was proved that KA models are ergodic for any $\rho<1$. More precisely we have the following.
\begin{proposition}[Theorem 3.5 of \cite{CMRT2}]
Fix an integer $d$ and $s\in[2,d]$ and consider the KA model on $\mathbb Z^d$ with facilitation parameter $s$.
Fix $\rho\in(0,1)$ and let $\mu$ be the $\rho$-Bernoulli product measure. Then
$0$ is a simple eigenvalue of the generator $L_{\rm env}$ defined by formula \eqref{Lenv} considered on $L_2(\mu)$.
\end{proposition}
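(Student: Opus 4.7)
The plan is to show that any $f \in L_2(\mu)$ with $L_{\rm env} f = 0$ must be $\mu$-a.s. constant. By reversibility, the identity $\int f (-L_{\rm env} f)\, d\mu = 0$ gives
$$\frac{1}{2}\sum_{x\sim y} \mu\bigl(c_{xy}(\eta)[f(\eta^{xy})-f(\eta)]^2\bigr)=0,$$
so for $\mu$-a.e.\ $\eta$ and every bond $(x,y)$ with $c_{xy}(\eta)=1$, we have $f(\eta)=f(\eta^{xy})$. In other words, $f$ is constant on the equivalence classes of the relation generated by allowed exchanges. The problem thus reduces to a purely combinatorial connectivity statement for the constrained dynamics on $\mu$-typical configurations.

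The second step is to show that $\mu$-a.s.\ any two configurations agreeing outside a large enough box can be connected by a chain of allowed exchanges. I would introduce, for each scale $L$, the notion of an $L$-\emph{good shell} around the origin: a thick annulus $\Lambda_{2L}\setminus \Lambda_L$ in which one can find enough empty sites arranged to form a ``frame'' (a configuration from which one can use vacancies as mobile reservoirs to perform any requested permutation of particles deeper inside). For the Kob--Andersen constraint with $s\le d$, one checks by an explicit ``vacancy propagation'' construction that an $L$-good shell allows one to transform $\eta$ into any other configuration $\eta'$ with the same occupation outside $\Lambda_{2L}$, by a finite sequence of allowed exchanges confined to $\Lambda_{3L}$. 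Since, for fixed $L$, a good shell exists with probability tending to $1$ as $L\to\infty$ (and the events at nested dyadic scales are essentially independent), a Borel--Cantelli argument shows that $\mu$-a.e.\ $\eta$ admits infinitely many good shells around $0$. Combined with the first step, this forces $f(\eta)$ to be independent of the occupation variables inside any finite box, hence tail-measurable, and Kolmogorov's zero--one law for the product measure $\mu$ yields $f \equiv$ const $\mu$-a.s.

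The main obstacle is the combinatorial core of the second step: exhibiting, in the cooperative regime $s=d$, an explicit scheme of allowed exchanges that lets the vacancies of a frame penetrate an interior region, perform a prescribed rearrangement there, and retract cleanly, all while every individual move satisfies the requirement of $s$ (resp.\ $s-1$) empty neighbors at the source and destination. Unlike in non-cooperative models, no single ``mobile cluster'' of fixed size is ever fully free to move, so the construction must chain together many partially constrained moves and rely on a multi-scale inductive definition of goodness. This is precisely the content of the ergodicity proof of \cite{TBF} (adapted in \cite{CMRT2}), and it is where all the subtlety of the KA constraint is absorbed; the rest of the argument (Dirichlet form + Borel--Cantelli + 0--1 law) is then standard.
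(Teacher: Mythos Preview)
The paper does not supply its own proof of this proposition; it is quoted from \cite{CMRT2}, which in turn builds on the combinatorial analysis of \cite{TBF}. Your sketch follows that same route: use the Dirichlet form to reduce $L_{\rm env}f=0$ to a connectivity statement for allowed exchanges, supply the connectivity via the frameability of typical large boxes, and conclude by a zero--one law. You also correctly isolate the genuinely hard step---the explicit vacancy-propagation construction in the cooperative case $s=d$---and defer it to \cite{TBF,CMRT2}, which is exactly what the paper does.

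One point to tighten. Since the dynamics is conservative, exchanges confined to $\Lambda_{3L}$ cannot connect $\eta$ to an $\eta'$ that agrees with $\eta$ outside $\Lambda_{2L}$ but has a different particle count inside $\Lambda_{2L}$. So your step~2, as stated, only yields that $f$ is invariant under finite permutations of the occupation variables, i.e.\ $f$ is \emph{exchangeable}; it does not show that $f$ is independent of $\eta|_{\Lambda_{2L}}$ outright. The zero--one law you want is therefore Hewitt--Savage for the exchangeable $\sigma$-algebra under the i.i.d.\ product measure $\mu$, rather than Kolmogorov for the tail. With that correction the argument is complete and matches the approach of the cited references.
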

Along the same lines one can prove that the process of the environment seen from the tagged particle is ergodic on $L_2(\mu_0)$, namely recalling that $\mu_0:=\mu(\cdot|\xi(0)=1)$ it holds
\begin{proposition}\label{ergoprop}
$0$ is a simple eigenvalue of the generator $L$ defined by formula \eqref{L} considered on $L_2(\mu_0)$.
\end{proposition}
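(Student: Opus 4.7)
My plan is to mimic the proof of the preceding ergodicity proposition for $L_{\rm env}$ (Theorem~3.5 of \cite{CMRT2}), adapting it to the joint motion of the environment and the tagged particle. Since $L$ is reversible with respect to $\mu_0$, the condition $Lf=0$ in $L_2(\mu_0)$ is equivalent to the vanishing of the associated Dirichlet form
\begin{equation*}
\mathcal D(f) = \tfrac12\sum_{y\neq 0,\,z\sim y}\mu_0\bigl(c_{yz}(\eta)[f(\eta^{yz})-f(\eta)]^2\bigr)+\tfrac12\sum_{y\sim 0}\mu_0\bigl(c_{0y}(\eta)[f(\tau_y\eta^{0y})-f(\eta)]^2\bigr).
\end{equation*}
Since each summand is non-negative, this forces $f(\eta^{yz})=f(\eta)$ for every allowed exchange away from the origin and $f(\tau_y\eta^{0y})=f(\eta)$ for every allowed jump of the tagged particle. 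It therefore suffices to show that the equivalence relation $\sim_L$ generated on $\Omega_0$ by these two move types has a single equivalence class $\mu_0$-almost surely; then $f$ is $\mu_0$-a.s.\@ constant and the claim follows.

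The moves of $L$ coincide with those of $L_{\rm env}$ restricted to configurations with a particle at the origin, with one twist: an $L_{\rm env}$-exchange across a bond $(0,y)$ produces $\eta^{0y}$, whereas the corresponding $L$-move re-centers and produces $\tau_y\eta^{0y}$. Unrolling a sequence of $L$-moves in the un-shifted picture shows that $\eta\sim_L\eta'$ in $\Omega_0$ iff there exists $v\in\mathbb Z^d$ together with a sequence of $L_{\rm env}$-allowed exchanges carrying $\eta$ to $\tau_{-v}\eta'$ while transporting the particle initially at the origin to the site $v$. Using the equivalence of $\mu_0$ and $\mu$, the claim then reduces to the following strengthening of the previous proposition: for $\mu\otimes\mu$-a.e.\@ pair of configurations, one can choose the connecting $L_{\rm env}$-path so that a prescribed (tagged) particle ends up at a prescribed site.

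This strengthening is essentially contained in the emptying/refilling strategy of \cite{TBF, CMRT2}: the $L_{\rm env}$-path is built by emptying a large box around the origin and around the symmetric difference of the two configurations, performing the desired local modifications inside the now-empty box, and then refilling using the good structure coming from infinity (whose $\mu$-a.s.\@ existence is the substantive content of the previous proposition). Once the box is empty, KA constraints inside it are trivially satisfied, so between the emptying and refilling phases one can freely walk any tracked particle to any target site inside the box at negligible cost. Choosing the target to be the origin yields $v=0$ and hence $\eta\sim_L\eta'$.

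The main obstacle is checking that this routing can be performed without interfering with the constraint-satisfaction arguments of \cite{TBF, CMRT2}. This is where one must verify that the tagged particle can be treated as just one of the particles manipulated during the emptying/refilling procedure; but as long as the intermediate empty region is large enough, the routing step is purely local and uses only elementary moves that satisfy the KA constraints. No new combinatorial input is required beyond what is already used in the preceding proposition.
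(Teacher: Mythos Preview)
The paper does not actually supply a proof of this proposition: it just asserts that the result follows ``along the same lines'' as Theorem~3.5 of \cite{CMRT2}. Your proposal is precisely an attempt to spell out that adaptation, so strategically you are doing exactly what the paper indicates, with more detail than the paper itself provides. The extra ingredient you identify---routing the tagged particle inside the emptied box between the emptying and refilling phases---is indeed the only new point, and it is unproblematic for the reason you give.

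One genuine imprecision, however: the reduction to ``for $\mu\otimes\mu$-a.e.\ pair of configurations one can choose a connecting $L_{\rm env}$-path'' cannot be read literally. Two independent samples from $\mu$ differ at infinitely many sites almost surely, so no finite chain of nearest-neighbour exchanges joins them. The argument of \cite{TBF,CMRT2} is not organised that way: what one shows is that for $\mu$-a.e.\ $\eta$, every configuration obtained from $\eta$ by a \emph{finite}, particle-number-preserving modification is reachable by allowed moves (this is exactly where your emptying/refilling description applies), and then one concludes via triviality of the exchangeable $\sigma$-algebra under Bernoulli measure. For the tagged-particle version you need, in addition, that for a.e.\ $\eta$ and every $v\in\mathbb Z^d$ there is an allowed $L$-path from $\eta$ to $\tau_v\eta$ (i.e.\ the tagged particle can be transported to $v$ and brought back to the same environment), which then upgrades exchangeable-triviality to full ergodicity under $\mu_0$. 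Your emptying/routing/refilling mechanism delivers both ingredients; only the final measure-theoretic packaging needs to be rephrased.
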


\begin{definition}[Allowed paths] \label{def:confpath}
Given $\Lambda\subset\mathbb Z^d$ and two configurations
$\eta,\sigma  \in \Omega$, a sequence of configurations
$$P_{\eta,\sigma}= (\eta^{(1)},\eta^{(2)},\ldots,\eta^{(n)})$$
starting at $\eta^{(1)}=\eta$ and ending at $\eta^{(n)}=\sigma$ is an
\emph{allowed  path}  from $\eta$ to $\sigma$ inside $\Lambda$ if for any
$i=1,\ldots,n-1$ there exists a bond $(x_i,y_i)$, namely a couple of neighboring sites, with
$\eta^{(i+1)}=(\eta^{(i)})^{x_iy_i}$ and $c_{x_iy_i}(\eta^{(i)})
=1$. We also require that paths do not go through the same configuration twice, namely for all $i,j\in[2,n]$ with $i\neq j$ it holds $\eta^{(i)}\neq\eta^{(j)}$.
We say that $n$ is the length of the path. Of course the notion of allowed path depends on the choice of the facilitation parameter $s$ which enters in the definition of $c_{xy}$. 
It is also useful to define allowed paths for the process seen from the tagged particle.
The paths are defined as before, with the only difference that for any
$i=1,\ldots,n-1$ there exists a bond $(x_i,y_i)$, namely a couple of neighboring sites, with
 $c_{x_iy_i}(\eta^{(i)})
=1$ and \begin{itemize}\item either $x_i=0$ and $\eta^{(i+1)}=\tau_{y_i}\left(\left(\eta^{(i)}\right)^{0y_i}\right)$  \item or  $x_i\neq 0$ and $\eta^{(i+1)}=(\eta^{(i)})^{x_iy_i}$ \end{itemize}
\end{definition}

Following the terminology of \cite{TBF} we introduce the notion of {\sl frameable} and {\sl framed} configurations.
\begin{definition}[Framed  and frameable configurations]
Fix a set $\Lambda \subset \mathbb Z^d$ and a configuration $\omega \in \Omega$.
We say that $\omega$ is \emph{$\Lambda$-framed} 
 if $\omega (x) =0$ for any $x \in \partial_{-} \Lambda$.
Let
$\omega^{(\Lambda)}$ be the configuration equal to $\omega_{\Lambda}$ inside $\Lambda$ and
equal to 1 outside $\Lambda$.
We say that  $\omega$ is  \emph{$\Lambda$-frameable}
if there exist a $\Lambda$-framed configuration $\sigma^{(\Lambda)}$ 
with at least one allowed configuration path $P_{\omega^{(\Lambda)}\to\sigma^{(\Lambda)}}$ inside $\Lambda$ (by definition any  framed configuration is also frameable). Sometimes, when from the context  it is clear to which geometric set $\Lambda$ we are referring, we will drop $\Lambda$ in the names and just say framed and frameable configurations. Of course the notion of frameable configurations depends on the choice of the facilitation parameter $s$.
\end{definition}

The following result, proved in \cite{TBF,CMRT2}, shows that on a sufficiently large lengthscale frameable configurations are typical.

\begin{lemma}\cite[Lemma 3.4]{CMRT2}
\label{frameability}
For any dimension $d$, any $\rho<1$ and any $\e>0$, there exists $\Xi=\Xi(\rho,\e,d)<\infty$ such that, for the KA process in $\Z^d$ with facilitation parameter $d$, for $L\geq \Xi$ it holds
\begin{equation}
\mu\left(\xi \text{ is } \Lambda_L-\text{frameable}\right)\geq 1-\e
\end{equation}
where we set $\Lambda_L=[0,L]^d$.
\end{lemma}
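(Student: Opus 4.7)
The plan is a multi-scale renormalization (bootstrap) argument. Because the statement is probabilistic rather than deterministic, the aim is to define a hierarchy of events at increasing scales $\ell_0 < \ell_1 < \cdots < \ell_K = L$, each implying frameability at its scale, whose $\mu$-probabilities tend to $1$ sufficiently fast as $k$ grows.

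At the base scale I would exploit a local \emph{seed}: a configuration containing a fixed small completely empty sub-cube (of side $\geq d+1$, so that every particle adjacent to it sees $d$ empty neighbors). For any $\rho<1$ the $\mu$-probability that such a seed appears in a cube of side $\ell_0$ tends to $1$ as $\ell_0\to\infty$ by independence of $\mu$ on disjoint sub-cells. The combinatorial core---essentially the argument of \cite{TBF} for ergodicity of KA---is that such a seed serves as a \emph{restricted} mobile cluster: there is an explicit finite sequence of allowed moves that transports it, and in doing so one can empty any prescribed sub-region, provided enough additional empty sites are available along the way to maintain the $s=d$ constraint at every step.

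For the inductive step I would declare a box of side $\ell_{k+1}$ \emph{good} if a prescribed internal arrangement of sub-boxes of side $\ell_k$ are all good, together with a mild auxiliary fluctuation condition on the remaining cells. Since this is a finite monotone intersection of translates of scale-$\ell_k$ events, a union/BK-type bound gives, writing $\varepsilon_k := 1 - \mu(\text{a box of side }\ell_k\text{ is good})$, a recursion of the form $\varepsilon_{k+1} \leq C(\ell_{k+1}/\ell_k)^{d}\varepsilon_k^{\alpha}$ for some $\alpha > 1$. Choosing $\ell_{k+1}$ polynomial in $\ell_k$ then yields super-exponential decay of $\varepsilon_k$ and produces the threshold $\Xi(\rho,\varepsilon,d)$ of the lemma.

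The main obstacle is the geometric step: explicitly constructing an allowed path from a good scale-$(k{+}1)$ configuration to a framed one. Because $s=d$, every particle move requires $d$ empty neighbors at \emph{both} endpoints, so the "channel" the seed carves out must be a thickened empty slab rather than a thin tube. One must verify that the recursive arrangement of good sub-boxes provides exactly the geometry needed to (i) merge sub-seeds into a larger empty region, (ii) propagate that region to each face of $\partial_{-}\Lambda_{\ell_{k+1}}$, and (iii) empty the corners, which are the hardest case since they are adjacent to $\partial\Lambda_{\ell_{k+1}}$ in many directions and therefore have few candidate empty neighbors. The no-repetition condition in Definition~\ref{def:confpath} is easy to honor once the moves are organized monotonically (e.g.\ emptying boundary shells one at a time, never re-filling them). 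Once this combinatorial construction is carried out, the probabilistic bounds follow routinely from the product structure of $\mu$ and the bootstrap recursion above.
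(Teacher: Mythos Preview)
The paper does not give its own proof of this lemma: it is quoted verbatim as \cite[Lemma~3.4]{CMRT2}, with the underlying construction attributed to \cite{TBF}. So there is no in-paper argument to compare against; the relevant benchmark is the renormalization scheme of \cite{TBF,CMRT2}, which your sketch is broadly aiming at.

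Your outline is in the right spirit---a multi-scale bootstrap with a seed at the base scale and a geometric propagation step---but the probabilistic recursion as written is inconsistent with your own definition of ``good''. You declare a scale-$(k{+}1)$ box good when a prescribed collection of scale-$k$ sub-boxes are \emph{all} good; a union bound over those sub-boxes then yields
\[
\varepsilon_{k+1}\ \le\ C\,(\ell_{k+1}/\ell_k)^{d}\,\varepsilon_k,
\]
i.e.\ $\alpha=1$, not $\alpha>1$. With $\alpha=1$ and a polynomial ratio $\ell_{k+1}/\ell_k$ the recursion blows up rather than contracts. In the actual \cite{TBF} scheme (and in the Aizenman--Lebowitz--Schonmann bootstrap-percolation argument it mirrors) the good event at scale $k{+}1$ requires only that \emph{some} disjoint scale-$k$ sub-box be good, together with a ``traversability'' condition (each slab of the appropriate codimension meeting the box contains an empty site). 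This is what produces a recursion of the shape $\varepsilon_{k+1}\le \varepsilon_k^{m}+\text{(aux)}$ with $m\ge 2$, and it also dictates the geometry of the path: you grow the single internally-framed seed outward slab by slab, rather than merging many pre-framed sub-boxes. Your description of ``merging sub-seeds'' and the corresponding corner analysis would need to be reworked accordingly.

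A second point: the link between frameability for KA with parameter $s=d$ and $d$-neighbour bootstrap percolation is what makes the geometric step tractable in \cite{TBF,CMRT2}. Invoking that equivalence (a framed sub-cube can absorb any adjacent slab containing a vacancy, exactly as an internally spanned rectangle grows in bootstrap percolation) would let you replace the somewhat speculative ``thickened slab channel'' construction by the standard rectangle-growth argument, and would also handle the corners automatically.
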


\section{An auxiliary diffusion}\label{sec:aux}

In this section we will  introduce a bond percolation process on a properly renormalized lattice and an auxiliary diffusion which corresponds to a random walk on the infinite component of this percolation. Then we will prove that this auxiliary process has a non-degenerate diffusion matrix  (Proposition \ref{prop:auxiliary}). This result will be the key starting point of the next section, where we will prove our main Theorem \ref{mainth} by comparing the diffusion matrix of the KA model with the diffusion matrix of the auxiliary process (Theorem \ref{Theo:compa}).

In order to introduce our bond percolation process we need some auxiliary notation.
Fix a parameter $L\in\N$ and  consider the renormalized lattice $(L+2)\Z^d$.
For $n\in\lbrace 0,\ldots d\rbrace$, let $B^{(n)}:=\lbrace 0,1\rbrace^{d-n}\times\lbrace 0,\ldots L-1\rbrace^n$. We say that $B^{(n)}$ is the elementary block of $L$--dimension $n$. $B^{(n)}_1,\ldots B^{(n)}_{\binom{d}{n}}$ are the blocks obtained from $B^{(n)}$ by permutations of the coordinates. Notice that one can write the cube of side length $L+2$ as a disjoint union of such blocks in the following way (see Figure~\ref{fig:Lambdas}):
\begin{equation}\label{decomposition}
\Lambda_{L+2}:=\lbrace 0,\ldots,L+1\rbrace^d=B^{(0)}\sqcup\bigsqcup_{n=1}^d\bigsqcup_{i=1}^{\binom{d}{n}}\left(B^{(n)}_i+2e_{j_{i1}}+\ldots+2e_{j_{in}}\right),
\end{equation}
where the block $B^{(n)}_i$ has length $L$ in the directions $e_{j_{i1}},\ldots,e_{j_{in}}$ and length $2$ in the other directions. 
By first decomposing $\Z^d$ in blocks of linear size $L+2$ and then using this decomposition, we finally get a paving of $\Z^d$ by blocks with side lengths in $\lbrace 2, L\rbrace$.
\begin{figure}
\begin{center}
\includegraphics[width=.4\textwidth]{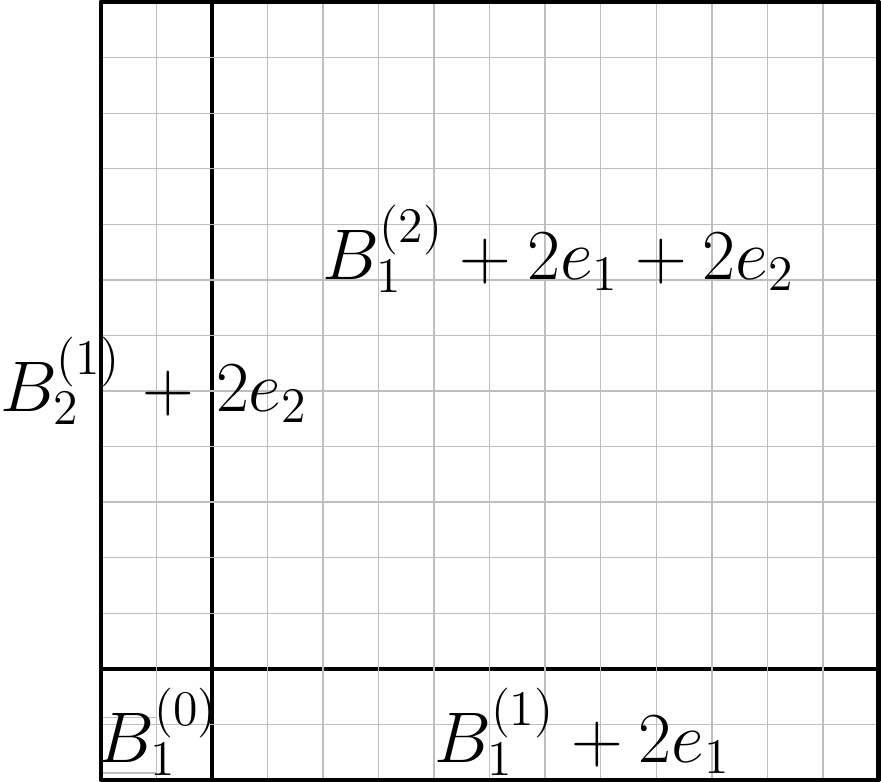}\hspace{.1\textwidth}\includegraphics[width=.4\textwidth]{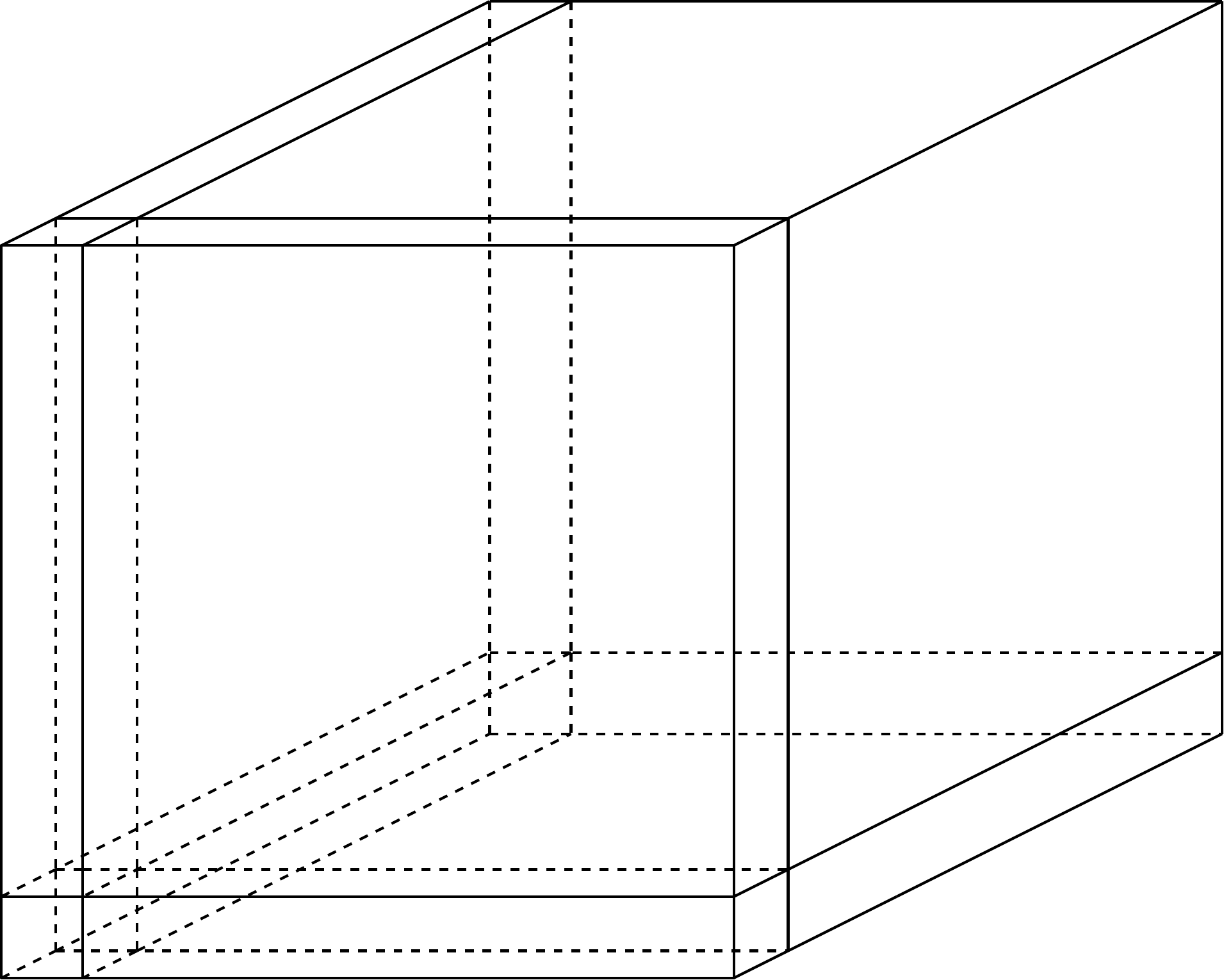}
\caption{The covering of $\lbrace 0,\ldots L+1\rbrace^d$ by blocks $B^{(n)}_i$ for $d=2,3$.}
\label{fig:Lambdas}
\end{center}
\end{figure}
We will speak of \emph{liaison tubes} or just \emph{tubes} for blocks of $L$--dimension $1$ and of \emph{facilitating blocks} for blocks of $L$--dimension $2$ or larger.
We will also call {\sl faces} of $B^{(n)}_i$ the $2^{d-n}$ (disjoint) regions of the form
$x_{j_{i1}}\in[0,L-1],\ldots,x_{j_{in}}\in[0,L-1]$ and $x_j=c_j$ with $c_j\in\{0,1\}$ for all $j\not\in\{j_{i1},\dots,j_{in}\}$.\\
Finally, for $x\in (L+2)\Z^d$, $i=1,\ldots,d$, we define the block neighborhood $\mathcal{N}_{x,i}$ of $(x,x+(L+2)e_i)$ recursively in the $L$--dimension of the blocks (see also Figure~\ref{fig:openedge}):
\begin{itemize}
\item $B^{(0)}+x$ and $B^{(0)}+x+(L+2)e_i$ belong to $\mathcal{N}_{x,i}$,
\item each tube adjacent to $B^{(0)}+x$ or $B^{(0)}+x+(L+2)e_i$ belongs to $\mathcal{N}_{x,i}$,
\item recursively, each block of $L$--dimension $n+1$ adjacent to some block of $L$--dimension $n$ in $\mathcal{N}_{x,i}$ is also in $\mathcal{N}_{x,i}$.
\end{itemize}

We are now ready to define our bond percolation process.
Let $\mathcal{E}((L+2)\Z^d)$ be the set of bonds of $(L+2)\Z^d$. Given a configuration $\eta\in\{0,1\}^{\mathbb Z^d}$,  the corresponding configuration on the bonds
$\bar \eta\in\{0,1\}^{\mathcal{E}((L+2)\Z^d)}$ is defined by $\bar\eta_{x,x+(L+2)e_i}=1$  iff \begin{enumerate}
\item \label{cond1} each tube in $\mathcal{N}_{x,i}$ contains at least a zero,
\item \label{cond2}
for all $n=2,\ldots,d$, for all $B$ block of $L$--dimension $n$ in $\mathcal{N}_{x,i}$, 
let $\Lambda_{B,i}$ with $i\in[1,2^{n-d}]$ be its faces. The configuration should be $\Lambda_{B,i}$ frameable
for  KA process  with parameter $n$.
\end{enumerate}
In other words the edge
 $(x,x+(L+2)e_i)$ is open if (1) and (2) are satisfied, closed otherwise. See Figure~\ref{fig:openedge} for an example of an open bond.

Note that conditions (1) and (2) do not ask anything of the configuration inside $B^{(0)}+x$ and $B^{(0)}+x+(L+2)e_i$. As a consequence, the distribution of $\bar\eta$ is the same for $\eta\sim\mu$ as for $\eta\sim\mu_0$. We denote it by $\bar\mu$.

\begin{figure}
\begin{center}
\includegraphics[scale=.4]{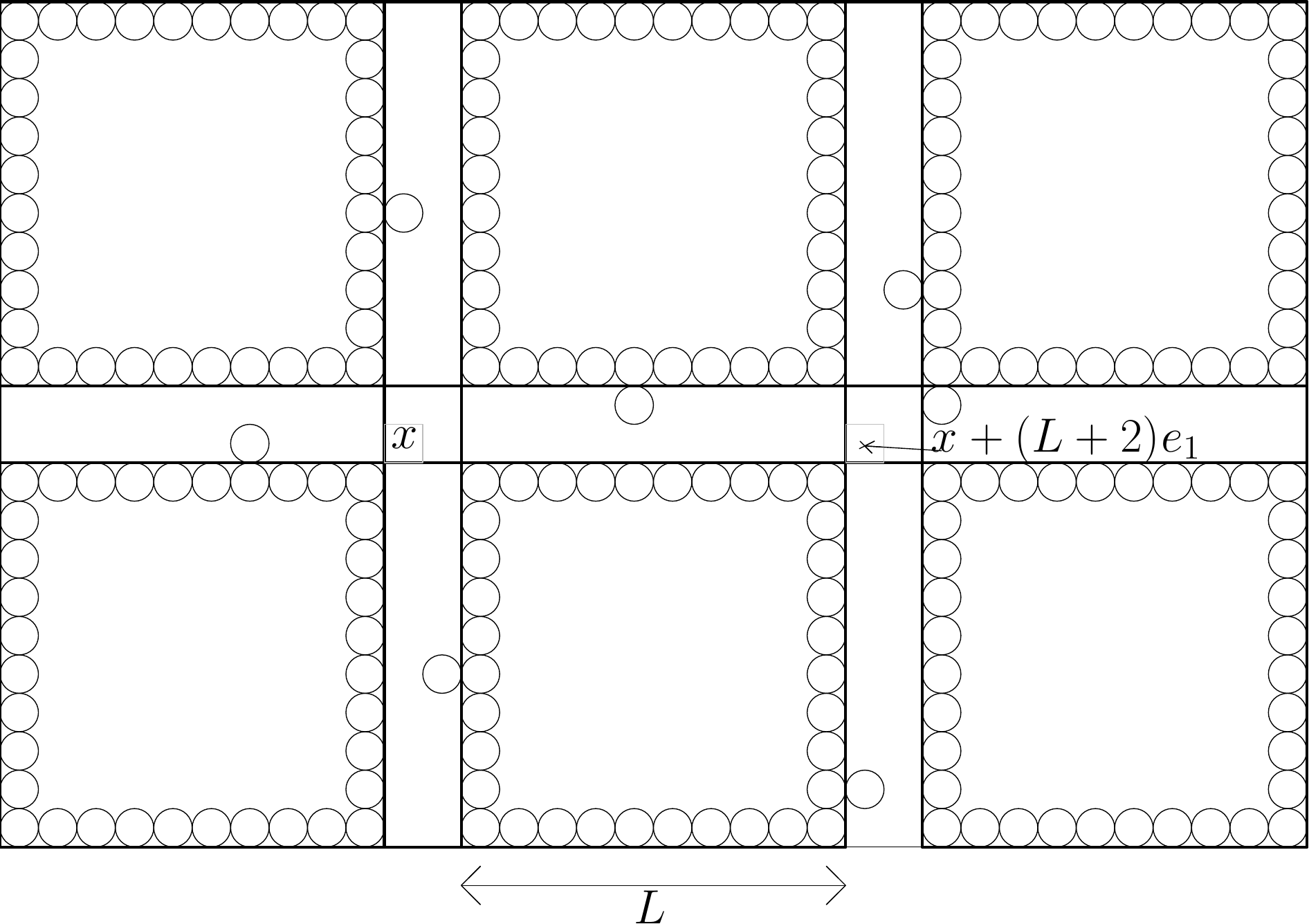}
\caption{A configuration in which the block neighborhood $\mathcal{N}_{x,1}$ is such that $\bar\eta_{x,x+(L+2)e_1}=1$ in dimension $2$. We represent the frameable blocks as already framed.}
\label{fig:openedge}
\end{center}
\end{figure}

\begin{lemma}\label{l:depperco}
$\bar\mu$ is a $(d+2)$--dependent bond percolation such that for any fixed $\rho\in (0,1)$, $\bar\mu(\bar\eta_{0,(L+2)e_i}=1)\underset{L\rightarrow\infty}{\longrightarrow} 1$ for all $i=1,\ldots,d$. In particular, for $L(\rho)$ large enough, there is an infinite open cluster.
\end{lemma}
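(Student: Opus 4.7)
The plan is to decompose the lemma into three pieces: (i) $\bar\mu$ is $(d+2)$-dependent; (ii) the single-bond marginal $\bar\mu(\bar\eta_{0,(L+2)e_i}=1)\to 1$ as $L\to\infty$; (iii) existence of an infinite open cluster for $L$ sufficiently large. Piece (iii) follows from (i) and (ii) via the classical Liggett--Schonmann--Stacey stochastic-domination theorem for $k$-dependent bond percolation, together with the fact that independent Bernoulli bond percolation on $\Z^d$ with $d\geq 2$ and parameter close enough to $1$ almost surely has an infinite cluster. So the substance is (i) and (ii). The equality of the induced laws for $\eta\sim\mu$ and $\eta\sim\mu_0$ is built into the definition, since conditions \eqref{cond1}--\eqref{cond2} never inspect the sites inside $B^{(0)}+x$ or $B^{(0)}+x+(L+2)e_i$; in particular the origin plays no role.

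For (i), the key observation is that $\bar\eta_{x,x+(L+2)e_i}$ is measurable with respect to $\eta$ restricted to $\mathcal{N}_{x,i}$, so I need only bound the renormalized extent of the block neighborhood. Since the paving \eqref{decomposition} places every block within a single renormalized cell, and each level of adjacency either stays in a given cell or moves to a nearest-neighbor cell in one coordinate direction, an induction on the $L$-dimension $n=1,\dots,d$ shows that $\mathcal{N}_{x,i}$ is contained in the union of renormalized cells within an $\ell^\infty$-box of radius $d+1$ around the edge endpoints. Consequently, two bonds at renormalized $\ell^\infty$-distance strictly greater than $d+2$ have disjoint block neighborhoods, so their indicators are measurable with respect to disjoint sub-$\sigma$-algebras of the i.i.d.\ field $\eta$, hence independent. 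This bookkeeping is the geometric core of the argument and is where I expect the bulk of the routine work.

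For (ii), I would use a union bound over the two ways the bond can fail. Condition \eqref{cond1} fails only if some tube in $\mathcal{N}_{x,i}$ is fully occupied; the number of such tubes is bounded by a constant $C(d)$, each has volume at least $2^{d-1}L$, and each is fully occupied with probability $\rho^{2^{d-1}L}$. Condition \eqref{cond2} fails only if, for some facilitating block $B$ of $L$-dimension $n\geq 2$ in $\mathcal{N}_{x,i}$ and some face $\Lambda_{B,i}$ of $B$, the configuration fails to be $\Lambda_{B,i}$-frameable for the KA process with parameter $n$. Each such face is a box of side $L$ inside $\Z^n$, so Lemma \ref{frameability} applied in dimension $n$ with facilitation parameter $n$ bounds that probability by any prescribed $\varepsilon>0$ as soon as $L\geq\Xi(\rho,\varepsilon,n)$. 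Since the number of such pairs $(B,\Lambda_{B,i})$ in $\mathcal{N}_{x,i}$ is bounded by a constant depending only on $d$, a union bound gives
\[
\bar\mu\bigl(\bar\eta_{0,(L+2)e_i}=0\bigr)\;\leq\; C(d)\,\rho^{2^{d-1}L}+C'(d)\,\varepsilon,
\]
which can be made arbitrarily small by first taking $\varepsilon$ small and then $L$ large.

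The main obstacle I foresee is pinning down the precise constant $d+2$ in (i): tracking in detail how the recursive definition of $\mathcal{N}_{x,i}$ adds at most one renormalized cell per $L$-dimension level requires a careful inductive check against \eqref{decomposition}. Fortunately, only the existence of \emph{some} finite dependence range is needed for the stochastic-domination step that yields the infinite cluster, so a weaker $C_d$-dependence would already suffice for the downstream arguments of the paper; the value $d+2$ is essentially cosmetic.
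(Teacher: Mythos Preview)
Your proposal is correct and follows essentially the same approach as the paper: bounding the dependence range by checking that the block neighborhoods $\mathcal{N}_{x,i}$ and $\mathcal{N}_{y,j}$ are disjoint once the renormalized distance exceeds $d+2$, controlling the tube and face failure probabilities via $\rho^{2^{d-1}L}$ and Lemma~\ref{frameability} respectively, and then deducing percolation. The paper is terser---it notes independence of the blocks rather than using a union bound and leaves the Liggett--Schonmann--Stacey domination step implicit---but the substance is identical.
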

\begin{proof}
To bound the dependence range, it is enough to check that for $x,y\in (L+2)\Z^d$ at distance at least $(L+2)(d+2)$, $\mathcal{N}_{x,i}$ and $\mathcal{N}_{y,j}$ are disjoint for any $i,j=1,\ldots,d$.

We now show that the percolation parameter goes to $1$ with $L$. First, the number of blocks in $\mathcal{N}_{x,i}$ depends only on $d$ and the configurations inside the different blocks in $\mathcal{N}_{x,i}$ are independent, so we just need to show that the probability for each block to satisfy condition \eqref{cond1} or \eqref{cond2} (depending on its $L$--dimension) goes to one. This is clearly true for condition \eqref{cond1}, since the probability that a given tube contains a zero is $1-\rho^{2^{d-1}L}$. For condition~\eqref{cond2}, consider a block of $L$--dimension $n$ with $n\geq 2$, and notice that under either $\mu$ or $\mu_0$, the configurations inside the $2^{d-n}$ different $n$-dimensional faces of the block are independent since the faces are disjoint. The conclusion therefore follows from Lemma~\ref{frameability}.
\end{proof}

Now we can define the auxiliary process $(Y_t)_{t\geq 0}$, which lives on $(L+2)\Z^d$ and whose diffusion coefficient we will compare with $D(\rho)$. Fix $L(\rho)$ so that under $\bar\mu$ the open cluster percolates. $Y$ is the simple random walk on the infinite percolation cluster. More precisely, let $\bar\mu^*:=\bar\mu(\cdot|0\leftrightarrow\infty)$, where as usual we write ``$0\leftrightarrow\infty$'' for ``$0$ belongs to the infinite percolation cluster''. $Y_0:=0$ and from $x\in (L+2)\Z^d$, $Y$ jumps to $x\pm(L+2)e_i$ at rate $\bar\eta_{x,x\pm(L+2)e_i}$. We write $\P^{\rm aux}_{\bar\mu^*}$ for the distribution of this random walk.

\begin{proposition}\label{prop:auxiliary}
For $L(\rho)$ large enough, there exists a positive (non-degenerate) $d\times d$ matrix $D_{\rm aux}(\rho)$ such that under $\P^{\rm aux}_{\bar\mu^*}$,
\begin{equation}\label{Bt}
\e Y_{\e^{-2}t}\underset{\e\rightarrow 0}{\longrightarrow}\sqrt{2D_{\rm aux}(\rho)}B_t,
\end{equation}
where $B$ is a standard $d$-dimensional Brownian motion and the convergence holds in the sense of
weak convergence of path measures on $D([0,\infty),\R^d)$. The matrix $D_{\rm aux}(\rho)$ is characterized by
\begin{multline}\label{varformulaaux}
u\cdot D_{\rm aux}(\rho)u=\inf_{f}\left\{\sum_{y\overset{(L+2)\Z^d}{\sim} 0}\bar\mu^*\left(\bar\eta_{0,y}[u\cdot y+f(\tau_{y}(\bar\eta))-f(\bar\eta)]^2\right)\right\}>0
\end{multline}
for any $u\in\R^d$, where the infimum is taken over local functions $f$ on $\lbrace 0,1\rbrace^{\mathcal{E}((L+2)\Z^d)}$.
\end{proposition}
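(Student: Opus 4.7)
My plan is to adapt the classical Kipnis-Varadhan invariance principle for random walks on supercritical percolation clusters (in the form worked out by De Masi, Ferrari, Goldstein and Wick) to the finite-range dependent setting of $\bar\mu$. First I would fix $L=L(\rho)$ large enough that Lemma \ref{l:depperco} applies with single-edge marginal arbitrarily close to $1$, and then invoke the Liggett--Schonmann--Stacey stochastic domination theorem to conclude that $\bar\mu$ dominates a supercritical Bernoulli bond percolation on $(L+2)\Z^d$. This would guarantee a $\bar\mu$-a.s.\ unique infinite open cluster $\mc C_\infty$ with positive density $\theta(\rho,L)>0$ and the standard supercritical percolation estimates, in particular that the chemical distance on $\mc C_\infty$ is almost-surely comparable to the Euclidean distance, so that $\bar\mu^*=\bar\mu(\cdot\mid 0\leftrightarrow\infty)$ is well defined.

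Next I would introduce the environment seen from the walker, $\omega_t:=\tau_{Y_t}\bar\eta$, which is a Markov process on $\{\bar\eta:0\leftrightarrow\infty\}$ with generator
\[
\mc L_{\rm aux}f(\bar\eta)=\sum_{y\overset{(L+2)\Z^d}{\sim}0}\bar\eta_{0,y}\bigl[f(\tau_y\bar\eta)-f(\bar\eta)\bigr].
\]
Thanks to the symmetry $\bar\eta_{0,y}=\bar\eta_{y,0}$ and translation-invariance of $\bar\mu$, the measure $\bar\mu^*$ is reversible for $\mc L_{\rm aux}$, and ergodicity of $(\omega_t)$ on $L^2(\bar\mu^*)$ follows from the uniqueness of $\mc C_\infty$ together with the ergodicity of $\bar\mu$ under shifts. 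Writing $u\cdot Y_t$ as an additive functional of $(\omega_t)$ and applying the Kipnis-Varadhan theorem, I would obtain the CLT \eqref{Bt} along with the variational characterization \eqref{varformulaaux} with $u\cdot D_{\rm aux}(\rho)u\ge 0$, the infimum corresponding to computing $u\cdot D_{\rm aux}(\rho)u$ as the squared $L^2$-norm of the projection of the linear form $y\mapsto u\cdot y$ onto the orthogonal of the subspace of exact gradients.

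The main obstacle is the strict positivity $u\cdot D_{\rm aux}(\rho)u>0$. I would argue by contradiction: suppose this quantity vanishes. The variational formula would then yield a sequence of local functions $f_n$ such that $u\cdot y+f_n(\tau_y\bar\eta)-f_n(\bar\eta)\to 0$ in $L^2(\bar\mu^*\otimes\bar\eta_{0,y})$. Summing this gradient identity along an open path in $\mc C_\infty$ from $0$ to a far-away cluster site $x$ (of length comparable to $|x|$ by the chemical-distance bounds of the first step) would yield $u\cdot x+f_n(\tau_x\bar\eta)-f_n(\bar\eta)\to 0$ in $L^2$. But translation-invariance of the Palm version of $\bar\mu^*$ along $\mc C_\infty$ forces the $L^2$-norm of $f_n(\tau_x\bar\eta)-f_n(\bar\eta)$ to be bounded uniformly in $x$, whereas $|u\cdot x|$ can be made arbitrarily large by choosing $x\in\mc C_\infty$ in direction $u$; this contradiction would force $u\cdot D_{\rm aux}(\rho)u>0$. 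The delicate technical point in this final step, which I expect to be the hardest part, is the quantitative $L^2$-control of the cumulative gradient error along the path, which has to be carried out via the quenched chemical-distance and large-deviation estimates available for supercritical Bernoulli percolation and transferred to $\bar\mu$ through the stochastic domination from step one.
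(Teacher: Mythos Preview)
Your reduction to the De Masi--Ferrari--Goldstein--Wick framework for the invariance principle and the variational characterization \eqref{varformulaaux} is correct and is exactly what the paper does (it simply cites \cite{dMFGW} and \cite{spohn} for these two facts). The real content of the proposition is the strict positivity $u\cdot D_{\rm aux}u>0$, and here your argument has a genuine gap.

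The problem is in the final contradiction step. From $u\cdot D_{\rm aux}u=0$ you get local functions $f_n$ with edge-gradient errors tending to $0$ in $L^2$. Summing along a path of length $O(|x|)$ gives, for each \emph{fixed} $x$, that $u\cdot x+f_n(\tau_x\bar\eta)-f_n(\bar\eta)\to 0$ as $n\to\infty$. You then want to contradict this by saying $\|f_n(\tau_x\bar\eta)-f_n(\bar\eta)\|_{L^2}$ is bounded uniformly in $x$. It is, for each fixed $n$ (by $2\|f_n\|_{L^2}$), but nothing in the variational problem controls $\|f_n\|_{L^2}$ uniformly in $n$: only the Dirichlet form of $f_n$ is small, and the minimizing sequence can (and typically will) have $\|f_n\|\to\infty$. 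So for large $|u\cdot x|$ you are forced to take $n$ large, and then the bound $2\|f_n\|$ is useless. This is precisely the well-known obstruction in the random conductance model: positivity does \emph{not} follow from the soft Kipnis--Varadhan machinery alone, and what you are implicitly trying to prove is sublinearity of the corrector, which is a hard theorem even for i.i.d.\ conductances and is not a consequence of the chemical-distance estimates you invoke.

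The paper takes a completely different, and much more direct, route for positivity. Following \cite{dMFGW}, it lower-bounds $u\cdot D_{\rm aux}u$ by the effective conductivity of the percolation network, and then uses the Chayes--Chayes bound \cite{chayes} to reduce to showing that a box of side $(L+2)N$ contains $\lambda N$ disjoint open left-right crossings with probability $\ge 1-e^{-\lambda N}$. To obtain this crossing estimate for the \emph{dependent} measure $\bar\mu$, the paper does not use Liggett--Schonmann--Stacey; instead it builds by hand an \emph{independent} site percolation $\hat\mu$ on a coarser lattice (in $d=2$, on $3(L+2)\Z\times 2(L+2)\Z$), defined directly from the same underlying $\mu$, with the property that a $\hat\mu$-open site forces several specific $\bar\mu$-edges to be open. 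Kesten's crossing theorem then gives the disjoint crossings for $\hat\mu$, and these transfer deterministically to $\bar\mu$ through the explicit coupling. Your LSS idea would also work for this step (domination by supercritical Bernoulli gives the crossing estimate), so that part of your plan is salvageable; but the positivity argument itself must go through conductivity/crossings, not through the corrector heuristic you sketch.
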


\begin{proof}
The convergence to Brownian motion (formula \ref{Bt}) was proved in \cite{dMFGW} in the case of independent bond percolation and the variational formula (the equality in \eqref{varformulaaux} was established in \cite{spohn}). As pointed out  in Remark 4.16 of \cite{dMFGW}, independence is only needed to show positivity of the diffusion coefficient. This property indeed relies on the fact that the effective conductivity in a box of size $N$ is bounded away from $0$ as $N\rightarrow\infty$. Therefore to prove the positive lower bound of formula \ref{varformulaaux} we just need to show that this property holds under $\bar{\mu}$ if $L$ is large enough. Notice that we just need to prove the result in dimension $d=2$. In fact, in order to prove that $e_1\cdot D_{\rm aux}e_1>0$, we just need to find a lower bound on the number of disjoint open paths from left to right in $[1,(L+2)N]^2$ (\cite[Proposition 3.2]{chayes}). The other directions are similar.
More precisely, we only need to show that for $L$ large enough there exists $\lambda>0$ such that for $N$ large enough, 
\begin{equation}\label{disjointpaths}
\bar{\mu}(\text{at least $\lambda N$ disjoint left-right open paths in $[1,(L+2)N]^2$})\geq 1- e^{-\lambda N}.
\end{equation}
To this aim, we embed open paths in $\bar\mu$ into open paths of yet another percolation process built from $\mu$. Let $\hat\mu$ be the independent site percolation process defined as follows. The underlying graph is $\hat{\Z^2}=3(L+2)\Z\times 2(L+2)\Z$. For $x\in\Z^2$, we let $\hat{x}=(L+2)(3x_1,2x_2)$ and $\hat{\mathcal N}_x$ be the union of $\mathcal{N}_{x,1}$ and the tubes just above and to the right (Figure~\ref{fig:tildeN}). We say that $\hat{x}$ is $^\wedge$-open if each tube (resp.\@ each block) in $\hat{\mathcal N}_x$ satisfies condition~\eqref{cond1} (resp.\@ \eqref{cond2}). This defines a probability measure $\hat\mu$ on $\lbrace 0, 1\rbrace^{\hat{\Z^2}}$, which is an independent site percolation process since $\hat{\mathcal N}_x\cap\hat{\mathcal N}_{x'}=\emptyset$ if $x\neq x'$. Moreover, $\hat\mu(0\text{ is $^\wedge$-open})\underset{L\rightarrow\infty}{\longrightarrow}1$, similarly to what we proved in Lemma~\ref{l:depperco}. We now use \cite[Theorem 11.1]{kesten} to say that \eqref{disjointpaths} holds with $\bar\mu$ replaced with $\hat\mu$ and ``open'' by ``$^\wedge$-open''. In order to deduce \eqref{disjointpaths}, notice $\hat\mu$ is transparently coupled with $\bar\mu$ since they are both constructed from $\mu$. Moreover, it is clear that for $x\in\Z^2$ the following holds
\begin{itemize}
\item $\hat x$ is $^\wedge$-open implies $(\hat x,\hat x+(L+2)e_1)$ is open (for our dependent percolation process),
\item $\hat x,\hat x+3(L+2)e_1$ are $^\wedge$-open implies $(\hat x,\hat x+(L+2)e_1),(\hat x+(L+2)e_1,\hat x+2(L+2)e_1),(\hat x+2(L+2)e_1,\hat x+3(L+2)e_1)$ are open,
\item $\hat x,\hat x+2(L+2)e_2$ are $^\wedge$-open implies $(\hat x,\hat x+(L+2)e_2),(\hat x+(L+2)e_2,\hat x+2(L+2)e_2)$ are open.
\end{itemize}
Therefore, for the natural coupling between $\bar\mu$ and $\hat\mu$, existence of disjoint $^\wedge$-open paths implies existence of disjoint open paths and \eqref{disjointpaths} follows.

\begin{figure}
\begin{center}
\includegraphics[scale=.4]{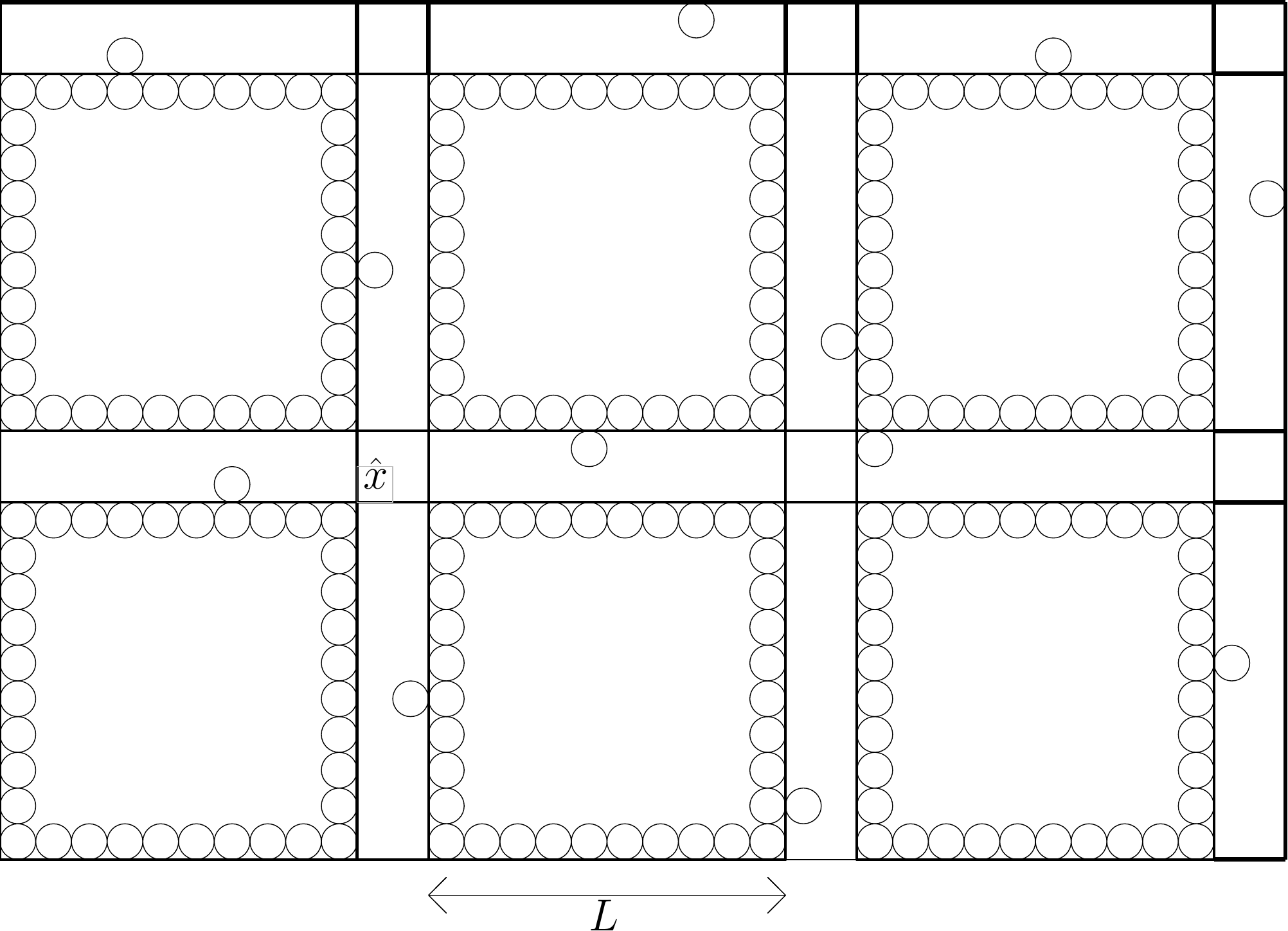}
\caption{$\hat{\mathcal N}_x$ in a configuration such that $\hat{x}$ is $^\wedge$-open.}
\label{fig:tildeN}
\end{center}
\end{figure} 

\end{proof}
\section{Comparison of the diffusion coefficients and proof of Theorem \ref{mainth}}
\label{comparison}
The main result of this section is the following Theorem, which states  that the self diffusion matrix for KA is lower bounded by the self diffusion matrix for the auxiliary model introduced in the previous section.

\begin{theorem}\label{Theo:compa}
There exists a constant $C=C(d,L(\rho))>0$ such that for all $i=1,\ldots d$,
\[
e_i\cdot D(\rho)e_i\geq Ce_i\cdot D_{\rm aux}(\rho)e_i.
\]
\end{theorem}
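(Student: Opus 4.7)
The strategy is a Dirichlet-form comparison between the variational characterizations \eqref{varformula} of $D(\rho)$ and \eqref{varformulaaux} of $D_{\rm aux}(\rho)$. Since both are infima over test functions, it suffices to associate, with any local test function $F$ for the KA problem, a test function $f_F$ for the auxiliary problem such that
\[
\text{(auxiliary Dirichlet form of $f_F$)} \,\leq\, C^{-1}\cdot\text{(KA Dirichlet form of $F$)},
\]
with $C=C(d,L(\rho))>0$. Taking the infimum over $F$ then gives Theorem~\ref{Theo:compa}.

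The natural choice is the conditional expectation $f_F(\bar\eta) := \E_{\mu_0}[F(\eta)\mid\bar\eta]$. By Jensen's inequality and the bounded density of $\bar\mu^*$ with respect to the law of $\bar\eta$ under $\mu_0$, the squared auxiliary increment across an open bond $(0,(L+2)e_i)$ is bounded by the $\mu_0$-average of $[u\cdot(L+2)e_i + F(\eta'') - F(\eta)]^2$ over admissible pairs $(\eta,\eta'')$ with coarse-grainings $\bar\eta$ and $\tau_{(L+2)e_i}\bar\eta$ respectively and $\eta(0)=\eta''(0)=1$. The core step is to dominate each such KA-increment by a telescoping sum along an \emph{allowed} path of the tagged-particle dynamics connecting $\eta$ to $\eta''$.

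To this end, I would construct canonical allowed paths $\gamma=(\eta=\gamma_0,\ldots,\gamma_n=\eta'')$ of length $n=n(d,L)$, supported inside the block neighborhood $\mathcal{N}_{0,i}$, whose tracer jumps have cumulative displacement $(L+2)e_i$. Openness of the bond guarantees, by the very definition of $\bar\eta_{0,(L+2)e_i}=1$, that every facilitating block in $\mathcal{N}_{0,i}$ is frameable for the appropriate KA sub-process and every liaison tube contains at least one vacancy. Building on the framing machinery of \cite{TBF,CMRT2}, one may (i) empty the inner boundary of each facilitating block to create vacancy reservoirs, (ii) route these vacancies through the liaison tubes to produce the $d-1$ empty neighbors demanded by the KA constraint at both the current and next tracer positions at each step along its crossing of $\mathcal{N}_{0,i}$, and (iii) time-reverse the preparatory moves so that $\eta''$ agrees with $\eta$ outside $\mathcal{N}_{0,i}$. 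Applying Cauchy--Schwarz to the telescoping identity
\[
u\cdot(L+2)e_i + F(\eta'') - F(\eta) \,=\, \sum_{k=0}^{n-1}\Bigl(u\cdot y_k\,\mathbf 1_{\gamma_k\to\gamma_{k+1}\text{ is a tracer move}} + F(\gamma_{k+1})-F(\gamma_k)\Bigr),
\]
where $\sum_{k:\text{tracer}} y_k = (L+2)e_i$, yields a bound by $n$ times a sum of squared one-step increments, each of which is a term of the KA Dirichlet form \eqref{varformula}. Exchanging sums and using that each KA transition is employed by only a $(d,L)$-bounded number of canonical paths gives the advertised comparison.

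The main obstacle is the explicit construction of these canonical paths in the strongly cooperative regime $s=d$, where isolated particle motion is forbidden and each tracer step requires $d-1$ empty neighbors at both endpoints. The nested decomposition~\eqref{decomposition} of $\Lambda_{L+2}$ into facilitating blocks of increasing $L$-dimension is designed exactly for this purpose: it produces a hierarchy of vacancy reservoirs that can be activated sequentially via the framing paths of \cite{TBF,CMRT2} to escort the tracer across $\mathcal{N}_{0,i}$. The delicate part is the combinatorial bookkeeping — ensuring simultaneously that the paths are reversible, stay inside $\mathcal{N}_{0,i}$, have length bounded by $n(d,L)$, and share each KA transition among a bounded number of different bonds — which is tractable only because $L=L(\rho)$ is a finite constant depending solely on $\rho$ and $d$.
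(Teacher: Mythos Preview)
Your proposal follows essentially the same route as the paper: define the auxiliary test function by conditional expectation, use Jensen and the bounded Radon--Nikodym density to pass from $\bar\mu^*$ to $\mu$, construct canonical allowed paths inside $\mathcal N_{0,i}$ exploiting the frameability of the facilitating blocks and the vacancies in the tubes, telescope, apply Cauchy--Schwarz, and count path multiplicities.

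One technical ingredient in the paper that your sketch leaves implicit is worth flagging. Rather than speaking of vague ``admissible pairs $(\eta,\eta'')$'', the paper fixes the target explicitly as $\eta''=\tau_y(\eta^{0y,\square})$, where $\eta^{0y,\square}$ swaps the contents of the cubes $\{0,1\}^d$ and $y+\{0,1\}^d$, and it conditions on the event $A=\{\eta(0)=1,\ \eta(x)=0\ \forall x\in\{0,1\}^d\setminus\{0\}\}$ rather than merely $\{\eta(0)=1\}$. This buys two things at once: (i) the map $\eta\mapsto\tau_y(\eta^{0y,\square})$ is a $\mu_A$-preserving involution with $\overline{\tau_y(\eta^{0y,\square})}=\tau_y\bar\eta$, which is exactly what the Jensen step needs to turn $\bar f(\tau_y\bar\eta)-\bar f(\bar\eta)$ into a conditional expectation of $F(\eta'')-F(\eta)$; and (ii) on $A$ the tracer already sits next to $2^d-1$ zeros, so the path construction can begin immediately without first having to manufacture vacancies inside $B^{(0)}$ (a region the open-edge condition says nothing about). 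The cost is only a factor $(1-\rho)^{1-2^d}$ when passing back from $\mu_A$ to $\mu_0$. Your outline is correct modulo making this choice of target and conditioning explicit.
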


This result will be proved by using the variational characterisation of the diffusion matrices and via path arguments.
More precisely,  for any move $(x,\xi)\to (x',\xi')$ which has rate $>0$ for the auxiliary process we construct (in Lemmata~\ref{moves1}, \ref{moves2}, \ref{moves3}, \ref{moves4}, \ref{moves5}) a path of moves, 
each having positive rate for the KA process and connecting $(x,\xi)$ to $(x',\xi')$.
 Once Theorem \ref{Theo:compa} is proved, our main result follows.
\begin{proof}[Proof of Theorem \ref{mainth}]The result follows by using  Proposition \ref{prop:auxiliary} and Theorem \ref{Theo:compa}.\end{proof}

We are therefore left with the proof of Theorem \ref{Theo:compa}. Let us start by establishing some key Lemmata. \\
Let $A=\left\{ \xi(0)=1, \xi(x)=0\ \forall x\in \lbrace 0,1\rbrace^d \setminus 0\right\}$. Define $\mu_A=\mu(\cdot|A)$ and denote by $\eta^{x y,{\square}}$ the configuration obtained from $\eta$ by exchanging the contents of the boxes $x+\lbrace 0,1\rbrace^d$ and $y+\lbrace 0,1\rbrace^d$. Then the following holds

\begin{lemma}\label{lemmacomparison}

\begin{equation}
e_i\cdot D_{\rm aux}e_i\leq \bar{\mu}(0\leftrightarrow\infty)^{-1} \inf_{f}\left\{\sum_{y\overset{(L+2)\Z^d}{\sim} 0}\mu_A\left(\bar\eta_{0,y}[y_i+f(\tau_{y}(\eta^{0y,\square}))-f(\eta)]^2\right)\right\},
\end{equation}
where the infimum is taken over local functions $f$ on $\lbrace 0,1\rbrace^{\Z^d}$.
\end{lemma}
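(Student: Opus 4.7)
The plan is to combine the variational characterization of $D_{\rm aux}$ in Proposition~\ref{prop:auxiliary} with a conditional-expectation/Jensen argument that converts an arbitrary local site function $f$ into an admissible edge test function $\tilde f$. First, using $\bar\mu^*=\bar\mu(\cdot\,|\,0\leftrightarrow\infty)$ and non-negativity of the integrand, I would rewrite \eqref{varformulaaux} and drop the $\{0\leftrightarrow\infty\}$ indicator to obtain
\begin{equation}
e_i\cdot D_{\rm aux}e_i \;\leq\; \bar\mu(0\leftrightarrow\infty)^{-1}\inf_{\tilde f}\sum_{y\overset{(L+2)\Z^d}{\sim} 0}\bar\mu\bigl(\bar\eta_{0,y}[y_i+\tilde f(\tau_y\bar\eta)-\tilde f(\bar\eta)]^2\bigr),
\end{equation}
where the infimum runs over local functions $\tilde f$ on $\lbrace 0,1\rbrace^{\mathcal{E}((L+2)\Z^d)}$. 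It then suffices to show that, for every local site function $f$, there exists an edge function $\tilde f$ with
\[
\sum_y \bar\mu\bigl(\bar\eta_{0,y}[y_i+\tilde f(\tau_y\bar\eta)-\tilde f(\bar\eta)]^2\bigr) \;\leq\; \sum_y \mu_A\bigl(\bar\eta_{0,y}[y_i+f(\tau_y\eta^{0y,\square})-f(\eta)]^2\bigr).
\]

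Given $f$, I would set $\tilde f(\bar\eta):=\E_{\mu_A}[f(\eta)\mid\bar\eta]$; locality of $f$ and the finite-range dependence of $\bar\eta$ on $\eta$ make $\tilde f$ local as well. The heart of the argument is three compatibility properties, all stemming from the observation that for every renormalized edge $(x,x+(L+2)e_j)$ the value $\bar\eta_{x,x+(L+2)e_j}$ depends only on sites in the block neighborhood $\mathcal{N}_{x,j}$, which explicitly excludes the two inner $B^{(0)}$ blocks at the endpoints:
\begin{enumerate}[(i)]
\item under $\mu_A$, the field $\bar\eta$ has the same law as under $\bar\mu$ (conditioning on the origin-block configuration $A$ is irrelevant to $\bar\eta$);
\item the map $S_y\eta:=\tau_y(\eta^{0y,\square})$ is a $\mu_A$-preserving bijection on configurations in $A$, because it sets the origin block to $A$ while the remaining blocks undergo a bijective permutation of their i.i.d.\@ Bernoulli contents;
\item for any renormalized neighbor $y$ of $0$, $\overline{S_y\eta}=\tau_y\bar\eta$, since $\eta^{0y,\square}$ alters $\eta$ only on $\{0,1\}^d\cup(y+\{0,1\}^d)$ (which $\bar\eta$ ignores), and the renormalization commutes with translations by vectors of $(L+2)\Z^d$.
\end{enumerate}

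Combining (ii) and (iii) with the change of variable $\eta'=S_y\eta$ yields $\E_{\mu_A}[f(S_y\eta)\mid\bar\eta]=\tilde f(\tau_y\bar\eta)$, while by definition $\E_{\mu_A}[f(\eta)\mid\bar\eta]=\tilde f(\bar\eta)$. The conditional Jensen inequality applied to $u\mapsto u^2$ then gives
\begin{equation}
\E_{\mu_A}\bigl[(y_i+f(S_y\eta)-f(\eta))^2\bigm|\bar\eta\bigr]\;\geq\;\bigl(y_i+\tilde f(\tau_y\bar\eta)-\tilde f(\bar\eta)\bigr)^2.
\end{equation}
Multiplying by the $\bar\eta$-measurable factor $\bar\eta_{0,y}$, taking $\E_{\mu_A}$, and invoking (i) delivers the desired termwise inequality; summing over $y$ and taking the infimum over local $f$ completes the proof. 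The main obstacle is the careful verification of (i)--(iii): one needs to unpack the definition of $\bar\eta$ and the blockwise exchange $\eta\mapsto\eta^{0y,\square}$ to check that $\bar\eta$ truly ignores the two inner $B^{(0)}$ blocks and that the ``swap--then--translate'' operation $S_y$ realizes a measure-preserving bijection of $A$-configurations. Once this bookkeeping is done, the Jensen step is routine.
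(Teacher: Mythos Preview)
Your proposal is correct and follows essentially the same route as the paper: define the edge function as the $\mu_A$-conditional expectation $\tilde f(\bar\eta)=\mu_A(f\mid\bar\eta)$, apply Jensen's inequality to pass from the site functional to the edge functional, and use that $\bar\eta$ is insensitive to the two $B^{(0)}$ blocks so that $\bar\mu$ coincides with the pushforward of $\mu_A$. Your explicit identification and verification of properties (i)--(iii) makes rigorous a step the paper handles in one line (``since $\bar\eta$ does not depend on the configuration $\eta$ inside $\lbrace 0,1\rbrace^d$ and $y+\lbrace 0,1\rbrace^d$''), but the underlying argument is the same.
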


\begin{proof}
Let $f$ be a local function on $\lbrace 0,1\rbrace ^{\Z^d}$. We associate with it a local function $\bar f$ on $\lbrace 0,1\rbrace^{\mathcal{E}((L+2)\Z^d)}$, defined  by $\bar f(\bar \eta)=\mu_A(f|\bar\eta)$. Then, for $y\overset{(L+2)\Z^d}{\sim} 0$, since $\bar\eta$ does not depend on the configuration $\eta $ inside $\lbrace 0,1\rbrace^d$ and $y+\lbrace 0,1\rbrace^d$, we can bound
\begin{multline}
\mu_A\left(\bar\eta_{0,y}[y_i+f(\tau_{y}(\eta^{0y,\square}))-f(\eta)]^2\right)\\
=  \mu_A\left(\bar\eta_{0,y}\mu_A\left([y_i+f(\tau_{y}(\eta^{0y,\square}))-f(\eta)]^2|\bar\eta\right)\right)\nonumber\\
\geq  \bar{\mu}\left(\bar\eta_{0,y}[y_i+\bar f(\tau_{y}(\bar \eta))-\bar f(\bar\eta)]^2\right)\\
\geq  \bar{\mu}^*\left(\bar\eta_{0,y}[y_i+\bar f(\tau_{y}(\bar \eta))-\bar f(\bar\eta)]^2\right)\bar{\mu}(0\leftrightarrow\infty).
\end{multline}
Therefore the result follows by \eqref{varformulaaux}.
\end{proof}

The next sequence of Lemmata will show that for all $\eta,y$ such that $\bar\eta_{0,y}=1$ and $\eta\in A$, there exists an allowed path from  $\eta$ to $\tau_y\left(\eta^{0y,\square}\right)$ of finite length.
In order to avoid heavy notations, we will sometimes adopt an informal description of the allowed paths in the proofs.
For simplicity, we state the results in the case $y=(L+2)e_1$, but the process would be the same in any direction. In the following, $c(d)$ denotes a constant depending only on $d$ which may change from line to line.

\begin{lemma}\label{moves1}
Let $\eta\in\lbrace 0,1\rbrace ^{\Z^d}$ such that $\bar\eta_{0,(L+2)e_1}=1$. 
Choose a block  of $L$--dimension $n\in[2,d]$ inside $\mathcal{N}_{0,1}$, call it $\Lambda$. Then, 
using at most $c(d)2^{L^d}$ allowed moves, one can empty every site on its interior boundary  $\partial_-\Lambda$ (see Figure~\ref{fig:openedge}).
\end{lemma}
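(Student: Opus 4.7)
The key input is condition~\eqref{cond2} in the definition of $\bar\eta$: since $\bar\eta_{0,(L+2)e_1}=1$, each of the $2^{d-n}$ $n$-dimensional faces $\Lambda_{B,j}$ of $\Lambda$ is frameable for the KA process with facilitation parameter~$n$. Unpacking this, for each face there is an allowed path in the intrinsic KA-$n$ dynamics on the $n$-dimensional face, from the restriction of $\eta$ to a configuration in which the $n$-dimensional interior boundary of the face is empty. Since such paths cannot revisit any configuration, each has length at most $2^{L^n}$, the total number of configurations on the face.

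The plan is to convert these face-level KA-$n$ paths into a bona fide allowed path for the full $d$-dimensional KA dynamics (with $s=d$), and chain them over the $2^{d-n}$ faces. The challenge is that a KA-$n$ move at an edge $(x,y)$ inside a face $\Lambda_{B,j}$ only certifies $n-1$ empty in-face neighbors at each endpoint (other than the other endpoint), whereas the KA-$d$ constraint demands $d-1$ such empty neighbors in $\mathbb Z^d$. I would secure the missing $d-n$ empty neighbors in the perpendicular directions as follows: (i)~use condition~\eqref{cond1} (every tube in $\mathcal N_{0,1}$ adjacent to $\Lambda$ contains a zero) to slide a zero out of each such tube into the perpendicular neighbor of the face lying outside $\Lambda$, via preliminary allowed moves; (ii)~order the $2^{d-n}$ faces of $\Lambda$ so that once one face is framed, the emptied sites in its outer shell serve as empty perpendicular neighbors for the adjacent faces of $\Lambda$ processed afterwards.

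With these preparations, each face-path becomes executable as a valid KA-$d$ allowed path, and concatenating over the $2^{d-n}$ faces empties $\partial_-\Lambda$. For the counting: each face-path contributes at most $2^{L^n}$ moves; each face contributes at most polynomially-in-$L$ preparatory moves for arranging perpendicular zeros; and there are $2^{d-n}\le c(d)$ faces. Summing gives at most $c(d)\cdot 2^{L^n}\le c(d)\cdot 2^{L^d}$ allowed moves, as required.

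The main obstacle is the lifting step, namely certifying at every intermediate configuration that the KA-$d$ constraint really holds. This requires a careful ordering both of the preparatory moves and of the face moves, together with a recursive appeal to the frameability assumption on lower-dimensional blocks of $\mathcal N_{0,1}$ to justify that the preparatory moves (which slide tube-zeros into position) are themselves KA-$d$-allowed. Some bookkeeping is needed to ensure that auxiliary moves do not disturb sites emptied by processing earlier faces; this is manageable because distinct blocks of $\mathcal N_{0,1}$ have disjoint interiors, so the auxiliary moves in one region can be localized away from already-framed faces.
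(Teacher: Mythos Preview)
Your high-level plan---lift the KA-$n$ framing path on each face to a KA-$d$ path by supplying $d-n$ extra empty neighbours in the short directions---is the right one, and it is exactly what the paper does. The gap is in where those extra zeros come from and in which order the blocks are processed.

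In your step (i) you try to borrow zeros from the tubes. This does not work geometrically: tubes (blocks of $L$--dimension $1$) are adjacent only to blocks of $L$--dimension $0$ and $2$, so for $n\ge 3$ no tube touches $\Lambda$ at all. Even for $n=2$ the argument is circular: sliding a zero inside a tube is a KA-$d$ move which, by Lemma~\ref{moves2}, is allowed only once the tube is already wrapped by zeros, i.e.\ once the adjacent facilitating blocks of $L$--dimension $\ge 2$ are framed---precisely what you are trying to establish. Your step (ii) does not rescue this: if a neighbouring face of $\Lambda$ is framed, only its $(n-1)$--dimensional shell is empty, so the perpendicular neighbour of a site $x$ in the current face is guaranteed empty only when $x$ itself lies on the shell, not for the interior moves of the KA-$n$ framing path. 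Thus neither (i) nor (ii) supplies the missing $d-n$ zeros at a generic step.

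The paper's fix is to run the recursion in the opposite direction, on the $L$--dimension of the blocks in $\mathcal N_{0,1}$ rather than on the faces of a fixed block. First frame every block of $L$--dimension $d$: condition~\eqref{cond2} with $n=d$ gives frameability for the full KA-$d$ dynamics directly, so no lifting is needed. Then, for $k=d-1,d-2,\ldots,2$ in that order, observe that every site $x$ in a block of $L$--dimension $k$ has, in each of its $d-k$ short directions, a neighbour lying on $\partial_-$ of an adjacent block of $L$--dimension $k+1$ in $\mathcal N_{0,1}$, and that block has just been framed. These $d-k$ guaranteed empty neighbours, combined with the in-face neighbours supplied by the KA-$k$ constraint along the framing path from condition~\eqref{cond2}, make every step KA-$d$--allowed. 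No preparatory tube moves are needed at this stage; tubes enter only afterwards, in Lemma~\ref{moves2}, once the present lemma has produced their wrapping.
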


\begin{proof}
For the blocks of $L$--dimension $d$, this follows from the condition on $\bar\eta$ which implies frameability of these blocks. The number of necessary moves is bounded by the number of configurations inside one block times the number of involved blocks. Then we deal with the blocks of $L$--dimension $d-1,\ldots 2$ iteratively. Note that the frameability condition given by the definition of $\bar\eta$ is such that a block of $L$--dimension $k\in\lbrace 2,\ldots,d-1\rbrace$ is frameable (in the sense of the KA process in dimension $d$) as soon as the neighboring blocks of $L$--dimension $k+1$ are framed. Indeed, for $k<d$, every site $x$ in a block of $L$--dimension $k$ is adjacent to a point in the interior boundary of $d-k$ different blocks of dimension $k+1$ (which belong to $\mathcal{N}_{0,1}$ by construction). Therefore the path allowed by KA model with parameter $k$ in order to frame the configuration (which exists thanks to condition (2)), is also allowed by KA model with parameter $d$ (since the missing $d-k$ empty sites are found in the interior boundary of the neighboring framed block).
\end{proof}

After this step, the tubes in $\mathcal{N}_{0,1}$ are {\sl wrapped by zeros}, namely for any site $x$ inside a tube, any neighbor of $x$ that belongs to a facilitating block (i.e.\@ to a block of $L$--dimension $\geq 2$) is empty. 
Next we notice that inside a tube wrapped by zeros, the jump of a particle to a neighboring empty site is always allowed (since the wrapping guarantees an additional zero in the initial and in the final position of the particle).  More precisely
the following holds 
\begin{lemma}\label{moves2}
Fix $i\in[1,\dots,d]$ and choose any configuration $\xi$ such that $ B^{(1)}_i$ is wrapped by zeros. Fix $x\sim y$ with $x\in B^{(1)}_i$ and $y\in B^{(1)}_i$. Then $c_{x,y}(\eta)=1$.
Therefore if $\xi,\xi'$ are two configurations that are both empty on $\partial B^{(1)}_i$, coincide outside of $B^{(1)}_i$ and have the same number of zeros inside $B^{(1)}_i$, then there is an allowed path with length $c(d)L$ from  $\xi$ to $\xi'$. Moreover, if $x,x'\in B^{(1)}_i$, and $\xi,\xi'$ have the same positive number of zeros inside the tube and the tracer respectively at $x,x'$, it takes at most $c(d)L$ allowed moves inside the tube to change $\xi$ into $\xi'$ and take the tracer from $x$ to $x'$.
\end{lemma}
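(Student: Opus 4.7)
My plan splits into three steps: (i) every exchange between nearest neighbors both inside $T:=B^{(1)}_i$ is kinetically allowed, (ii) this lifts to an allowed path of the environment dynamics, (iii) the argument extends to the tracer. The main obstacle sits in Step (i).

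\emph{Step (i).} Take $i=1$ for notational concreteness, so $e_1$ is the \emph{long} direction of $T$ and $e_2,\dots,e_d$ are the \emph{short} directions. The crucial geometric observation, which I would obtain by unfolding the decomposition \eqref{decomposition} to its periodic extension to $\Z^d$, is that for every $z\in T$ and every $j\in\{2,\dots,d\}$ exactly one of $z\pm e_j$ lies in $T$ while the other lies in a block of $L$--dimension $\geq 2$ adjacent to $T$. Indeed, leaving $T$ through a short direction turns the $j$-th coordinate from a ``short'' ($\in\{0,1\}$) to a ``long'' ($=-1$ or $L+2$, which both fall in the long range of the neighboring $\Lambda_{L+2}$-tile) position, so the outside neighbor acquires a second long coordinate and sits in a facilitating block; since this block is adjacent to $T$ it belongs to the block neighborhood of $T$, so the wrapping hypothesis makes it empty. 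In particular each $z\in T$ has $d-1$ guaranteed empty neighbors, one per short direction.

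Now for $x\sim y$ with $x,y\in T$, $\xi(x)=1,\xi(y)=0$, I split into cases: if the edge is along $e_1$, the $d-1$ wrapping zeros of $x$ (resp.\@ of $y$) are all distinct from $y$ (resp.\@ from $x$); if the edge is along $e_j$ for some $j\geq 2$, the opposite-short neighbor of $x$ in direction $e_j$ is a wrapping zero distinct from $y$, and combined with $y$ plus the $d-2$ wrapping zeros of $x$ in the remaining short directions this gives $x$ at least $d$ empty neighbors. A symmetric count works for $y$. Since $s=d$, condition \eqref{constraint} is satisfied and $c_{xy}(\xi)=1$.

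\emph{Step (ii).} With all within-$T$ exchanges allowed, the problem of connecting $\xi$ to $\xi'$ by allowed moves inside $T$ reduces to the SSEP connectivity problem on the finite connected graph $T$; irreducibility of SSEP at fixed particle number provides a finite sequence of such exchanges (the hypothesis that $\xi,\xi'$ vanish on $\partial T$ in particular forces wrapping along the whole path, so Step (i) keeps applying). For the $c(d)L$ bound I would view $T$ as $L$ consecutive slices $\{k\}\times\{0,1\}^{d-1}$ and match the profiles of $\xi$ and $\xi'$ slice by slice: intra-slice rearrangements cost $O(1)$ moves per slice (the slice has only $2^{d-1}$ sites), and single-particle transfers across a slice boundary cost $O(1)$ moves after first emptying the target position in the adjacent slice. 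Scheduling these operations along the long direction yields the linear-in-$L$ upper bound.

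\emph{Step (iii) and main obstacle.} The generator of the process seen from the tracer agrees with $L_{\rm env}$ on exchanges that do not involve the origin, so within-$T$ exchanges not touching the tracer are still allowed by Step (i). Since $T$ contains at least one zero by hypothesis, I would (a) apply Step (ii) inside $T$ to free a nearest-neighbor path of empty sites from $x$ to $x'$, (b) slide the tracer along this path one step at a time, each move allowed by Step (i), and (c) re-apply Step (ii) to place the remaining particles as in $\xi'$. Each stage costs $c(d)L$ moves, so does the total. The only delicate point of the whole argument is the geometric check in Step (i)—that a short-direction outside-neighbor of a tube site always lands in a facilitating block of the block neighborhood of $T$, rather than in an adjacent tube or corner—which rests precisely on $-1$ and $L+2$ being long positions in the relevant $\Lambda_{L+2}$-tiles. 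Once this is verified, the rest of the argument is a neighbor count together with SSEP irreducibility.
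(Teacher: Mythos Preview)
Your proof is correct and follows the same approach as the paper's one-line argument (``the wrapping ensures the satisfaction of the constraint for any such exchange''); you have simply made the neighbor count explicit and added the routine SSEP-irreducibility step for the path length bound. One minor coordinate slip in Step~(i): exiting the tube through a short direction from coordinate $1$ lands at $2$ in the \emph{same} $\Lambda_{L+2}$-tile (where $2$ already lies in the long range $\{2,\dots,L+1\}$), not at $L+2$ in a neighboring tile---but this does not affect your conclusion that the exit site sits in a facilitating block.
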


\begin{proof}
One just needs to notice that the wrapping ensures the satisfaction of the constraint for any such exchange.
\end{proof}

\begin{lemma}\label{moves3}
Fix a configuration such that: there is at least one zero in each tube inside $\mathcal{N}_{0,1}$; each  such  tube is wrapped by zeros; the tracer is at zero and the remaining sites of   $\lbrace 0,1\rbrace ^d$ are empty. Then, the tracer can be moved to any position in $B_1^{(1)}+2e_1$ , namely for any $y\in B_1^{(1)}+2e_1$ there is an allowed path from $(0,\eta)$ to $(y,\eta')$ for at least a configuration $\eta'$.
\end{lemma}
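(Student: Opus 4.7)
The plan is to bring the tracer from $0$ into the tube $T := B_1^{(1)} + 2e_1$ via the short chain $0 \to e_1 \to 2e_1$, and then navigate it inside $T$ to the chosen target $y$ using the ``moreover'' part of Lemma~\ref{moves2}. The naive execution of this plan hits a snag: Lemma~\ref{moves2} requires $T$ to contain at least one empty site other than the tracer during the in-tube navigation, but the hypothesis only guarantees a single empty site in $T$ initially, and this lone empty site disappears as soon as the tracer enters. To get around this, I would first evacuate one tube particle into $B^{(0)}$ so that $T$ enters the entry step with two empty sites and still has one empty site after the tracer has arrived at $2e_1$.

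Concretely, in a preliminary evacuation phase I would use the first part of Lemma~\ref{moves2} (adjacent swaps inside a wrapped tube are always allowed) to shuffle the original empty site of $T$ to $2e_1$, then perform the in-tube slide $3e_1 \to 2e_1$ (creating an empty at $3e_1$), then the boundary pop $2e_1 \to e_1$, and finally stash the extracted particle via $e_1 \to e_1 + e_2$ inside $B^{(0)}$. Next, I would execute the entry moves $0 \to e_1$ followed by $e_1 \to 2e_1$, landing the tracer at $2e_1 \in T$. At this point $T$ is still wrapped (no facilitating block has been touched), it still contains an empty site at $3e_1$, and the ``moreover'' part of Lemma~\ref{moves2} yields an allowed path that moves the tracer from $2e_1$ to the target $y \in T$, producing the required final configuration $\eta'$.

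The hard part will be the constraint bookkeeping for the four non-trivial moves above (the pop $2e_1 \to e_1$, the stash $e_1 \to e_1 + e_2$, and the two tracer steps $0 \to e_1$, $e_1 \to 2e_1$). Since $s=d$, each exchange demands $d-1$ empty neighbors on each side. On the tube side the wrapping of $T$ always supplies the $d-1$ sites $2e_1 - e_j$, $j = 2,\dots,d$, as empty facilitating-block neighbors of $2e_1$, plus the in-tube neighbor $3e_1$ once it has been emptied. On the $B^{(0)}$ side, the hypothesis that $B^{(0)} \setminus \{0\}$ is empty provides a pool of confirmed empty neighbors; a case-by-case verification shows that at each step the $d-1$ required slots can be drawn from $\{0\}$ (once the tracer has left), $\{2e_1\}$ (once the pop has been performed), and $\{e_1 + e_j : j \geq 3\}$ or $\{e_j : j \geq 2\}$ depending on the move, and that the single stashed particle at $e_1 + e_2$ never falls in any of these slots. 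Once this bookkeeping is done, composing the three phases yields the required allowed path from $(0, \eta)$ to $(y, \eta')$.
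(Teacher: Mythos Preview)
Your proposal is correct and follows essentially the same approach as the paper: both recognize that the single guaranteed zero in $T$ is consumed once the tracer enters, and both repair this by borrowing an empty site from $B^{(0)}$ so that Lemma~\ref{moves2} can handle the in-tube navigation. The only difference is cosmetic ordering---the paper first sends the tracer to $2e_1$ and then pushes the zero at $e_1+e_2$ into the tube via the single exchange $(e_1+e_2)\leftrightarrow(2e_1+e_2)$ (using the wrapping zero at $2e_1+2e_2$), whereas you first evacuate a tube particle out to $e_1+e_2$ and then bring the tracer in; the paper's route is slightly shorter but the idea and the constraint verification are the same.
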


\begin{proof}
It is clear that the tracer can get to $e_1$ and we can bring a zero to $2e_1$ thanks to Lemma~\ref{moves2}. Then we can exchange the configuration in $e_1$ and $2e_1$, take the zero in $e_1+e_2$ inside the tube (namely exchange the configuration in $e_1+e_2$ and $2e_1+e_2$ thanks to the empty site in $2e_1+2e_2$ guaranteed by the wrapping), use Lemma~\ref{moves2} again to get the  to the desired position, and take the zero back to $e_1+e_2$ (if the desired position is $2e_1+e_2$ there is no need to take the zero in $e_1+e_2$ inside the tube).
\end{proof}

\begin{lemma}\label{moves4}
Fix a configuration such that all tubes adjacent to $B^{(0)}$ are wrapped by zeros and they all contain a zero except possibly $B_1^{(1)}+2e_1$. If they do not contain the tracer, we can exchange the configurations in the slices $\lbrace 1\rbrace\times\lbrace 0,1\rbrace^{d-1}$ and $\lbrace 2\rbrace\times\lbrace 0,1\rbrace^{d-1}$ in at most $c(d)L$ allowed moves.
\end{lemma}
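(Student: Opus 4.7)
The plan is to sweep through the $2^{d-1}$ pairs $\{(1,\epsilon),(2,\epsilon)\}$, $\epsilon\in\{0,1\}^{d-1}$, one by one, exchanging the occupations of each differing pair in $O(L)$ moves while leaving the rest of the configuration intact. Since $2^{d-1}$ depends only on $d$ and the hypothesis of the lemma is restored after each round, the total length stays within $c(d)L$.

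Fix a pair with $\xi(1,\epsilon)\neq\xi(2,\epsilon)$; by symmetry assume $\xi(1,\epsilon)=1$ and $\xi(2,\epsilon)=0$. The KA constraint with $s=d$ requires $(1,\epsilon)$ to have $d$ empty neighbors and $(2,\epsilon)$ to have $d-1$ empty neighbors other than $(1,\epsilon)$. On the target side this is automatic: of the $2(d-1)$ neighbors of $(2,\epsilon)$ in the directions $\pm e_j$, $j\geq 2$, exactly $d-1$ lie outside the tube $B_1^{(1)}+2e_1$, in facilitating blocks adjacent to it, and are empty by the wrapping hypothesis. On the source side, the target itself supplies one empty neighbor, while the remaining $d-1$ must be produced in the $d-1$ tubes adjacent to $B^{(0)}$ in the outward directions $\sigma_j e_j$ with $\sigma_j=2\epsilon_j-1\in\{\pm 1\}$ for $j=2,\ldots,d$. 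Each such tube $T_j(\epsilon)$ is wrapped by zeros by hypothesis and, being distinct from $B_1^{(1)}+2e_1$, contains a zero; by Lemma~\ref{moves2} I slide that zero to the site $u_j(\epsilon):=(1,\epsilon)+\sigma_j e_j$ in at most $c(d)L$ allowed moves, all strictly internal to $T_j(\epsilon)$ and hence preserving both the wrapping (which only concerns facilitating blocks, untouched by these moves) and the configuration outside $T_j(\epsilon)$.

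Once these $d-1$ auxiliary zeros are in place, a single allowed exchange swaps $\xi(1,\epsilon)$ and $\xi(2,\epsilon)$. I then apply Lemma~\ref{moves2} a second time in each $T_j(\epsilon)$ to return the zero to its initial position, which is legal because the wrapping still holds and each tube still contains the same number of zeros as at the start. After these three steps the configuration differs from the initial one only in that the pair $\{(1,\epsilon),(2,\epsilon)\}$ has been swapped, so the lemma's hypothesis is exactly preserved for the next value of $\epsilon$. Summing over $\epsilon$ gives a bound of at most $2^{d-1}\bigl(2(d-1)c(d)L+1\bigr)\leq c'(d)L$ allowed moves.

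The main obstacle lies in finding the $d-1$ auxiliary zeros without appealing to the tube $B_1^{(1)}+2e_1$, which by hypothesis may be particle-saturated: this is exactly what the $d-1$ lateral tubes $T_j(\epsilon)$ for $j\geq 2$ are for. The complementary observation, needed on the target side, is that the wrapping around $B_1^{(1)}+2e_1$ already provides the $d-1$ empty neighbors of $(2,\epsilon)$, so its possibly being full poses no problem for the exchange constraint.
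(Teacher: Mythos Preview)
Your proof is correct and follows essentially the same approach as the paper's: for each $\epsilon\in\{0,1\}^{d-1}$, the $d-1$ empty neighbours of $(2,\epsilon)$ are supplied automatically by the wrapping of $B_1^{(1)}+2e_1$, while the $d-1$ additional empty neighbours needed at $(1,\epsilon)$ are brought in via Lemma~\ref{moves2} from the $d-1$ lateral tubes adjacent to $B^{(0)}$ (none of which is $B_1^{(1)}+2e_1$ and hence each contains a zero). Your version is in fact a bit more careful than the paper's, making explicit the restoration of the tube configurations after each swap so that the hypothesis is preserved for the next $\epsilon$.
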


\begin{proof}
For $x\in\lbrace 1\rbrace\times\lbrace 0,1\rbrace^{d-1}$, $x+e_1$ has $d-1$ empty neighbors in $\lbrace 2\rbrace\times\Z^{d-1}$ thanks to the wrapping. Moreover $x$ is adjacent to $d$ tubes, $d-1$ of which are not $B_1^{(1)}+2e_1$ and therefore contain a zero that can be brought to a site adjacent to $x$ using Lemma~\ref{moves2}. The constraint for the exchange is then satisfied if the configurations differ at $x, x+e_1$ (else the exchange is pointless).

\end{proof}

\begin{lemma}\label{moves5}
Fix a configuration such that: all tubes adjacent to $B^{(0)}$ are wrapped by zeros and they all contain a zero except possibly $B_1^{(1)}+2e_1$; either the slice $\lbrace 0\rbrace\times\lbrace 0,1\rbrace ^{d-1}$ or the slice $\lbrace 1\rbrace\times\lbrace 0,1\rbrace ^{d-1}$ are completely empty. Then we can exchange the configurations in $\lbrace 0\rbrace\times\lbrace 0,1\rbrace ^{d-1}$ and $\lbrace 1\rbrace\times\lbrace 0,1\rbrace ^{d-1}$ in at most $c(d)L$ allowed moves.
\end{lemma}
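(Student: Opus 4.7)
The plan is the following. Assume without loss of generality that the slice $\{0\}\times\{0,1\}^{d-1}$ is completely empty (the other case being symmetric, with the roles of $+e_1$ and $-e_1$ reversed). The goal is then, for each $x\in\{0,1\}^{d-1}$ with $\xi(1,x)=1$, to move the particle from $(1,x)$ to $(0,x)$ via a direct exchange. The difficulty is that with facilitation parameter $s=d$ this exchange requires $d-1$ empty non-partner neighbors on each side, and as we fill up slice $\{0\}$ the free same-slice empties available on the destination side get progressively consumed. The key idea is to split the $2^{d-1}$ sites of $\{0,1\}^{d-1}$ according to the parity of $x_2+\cdots+x_d$: even-parity and odd-parity sites each form an independent set in the hypercube graph, and we treat them in two successive phases.

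In the first phase we process the even-parity sites one by one. For a given such $x$, we use Lemma~\ref{moves2} to bring a zero to the entry site adjacent to $(1,x)$ in each of the $d-1$ tubes in directions $\pm e_j$ with $j\geq 2$; this is possible because by hypothesis these tubes are wrapped and each contains at least one zero. We then exchange $(1,x)$ and $(0,x)$: the source has its $d-1$ empty non-partner neighbors from the freshly positioned tube zeros, while the destination has $d-1$ empty neighbors inside slice $\{0\}$, namely its same-slice neighbors, all of which have odd parity and are therefore still empty at this stage. After this phase, every even-parity site of slice $\{1\}$ has been emptied. In the second phase we process the odd-parity sites analogously. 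For such $x$, we now use Lemma~\ref{moves2} to bring a zero to the entry adjacent to $(0,x)$ in each of the $d-1$ tubes $\pm e_j$ ($j\geq 2$); by hypothesis these tubes all contain a zero, since the only tube possibly without one is $B_1^{(1)}+2e_1$. We then exchange $(1,x)$ and $(0,x)$: the destination's constraint is satisfied by the tube zeros, while the source $(1,x)$ has its $d-1$ same-slice neighbors in slice $\{1\}$ empty because they have even parity and were already treated in the first phase.

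Since each application of Lemma~\ref{moves2} takes at most $c(d)L$ moves and preserves both the number of zeros in each tube and the wrapping, the hypotheses remain valid throughout the procedure. With at most $2^{d-1}$ sites to process and at most $d$ tube repositionings per site, the total cost is bounded by $c(d)L$. The main obstacle is precisely the one addressed by the parity decomposition: a naive sequential treatment would destroy the free same-slice empties on the destination side too quickly, but the parity trick ensures that at each step either the destination's same-slice neighbors (in the first phase) or the source's same-slice neighbors (in the second phase) are all empty, so that only $d-1$ tube zeros --- always available by hypothesis --- need to be mobilized on the other side of the exchange.
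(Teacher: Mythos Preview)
Your argument is correct. The paper's proof reaches the same conclusion by a closely related but more flexible route: it processes the zeros in slice $\{0\}\times\{0,1\}^{d-1}$ in an \emph{arbitrary} order and, at the $i$-th step, simply counts the number $N_i$ of same-slice neighbours of the current site that have not yet been treated. Since the $d-1-N_i$ already-treated neighbours have had their zeros pushed to slice $\{1\}$ (where they sit next to $x+e_1$), one always has exactly $N_i$ free empties on the slice-$0$ side and $d-1-N_i$ on the slice-$1$ side; the $d-1$ tube zeros are then split $N_i$ versus $d-1-N_i$ accordingly. Your parity decomposition is precisely the special ordering that forces $N_i\in\{0,d-1\}$ at every step, so that all tube zeros go to a single side. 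What the counting argument buys is the realisation that no particular order is needed---the invariant ``treated-in-slice-$1$ plus untreated-in-slice-$0$ $=d-1$'' does all the work---while your version trades that generality for a more concrete two-phase description.
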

\begin{proof}
We describe the case $\lbrace 0\rbrace\times\lbrace 0,1\rbrace ^{d-1}$ empty. Order arbitrarily the zeros in positions $x\in\lbrace 0\rbrace\times\lbrace 0,1\rbrace ^{d-1}$ and move them one by one to $x+1$. When attempting to move the $i$--th zero, initially in position $x$, a certain number $N_i$ of its neighbors in slice $\lbrace 0\rbrace\times\lbrace 0,1\rbrace ^{d-1}$ have not been touched and are still empty. The other $d-1-N_i$ zeros are now in neighboring positions of $x+e_1$. Moreover, there are $d-1$ tubes adjacent to both $x$ and $x+e_1$. In $N_i$ of those, we take the zero to the position adjacent to $x+e_1$, and in the other $d-1-N_i$ to the position adjacent to $x$. Now the condition to exchange the variables at $x,x+e_1$ is satisfied.
\end{proof}
We are now ready to  prove the following key result
\begin{lemma}\label{ultimo}
There exists a constant $C=C(L,\rho)<\infty$ such that for any $f$ local function on $\lbrace 0,1\rbrace^{\Z^d}$, we have
\begin{multline}
\mu_A\left(\bar\eta_{0,y}[y_i+f(\tau_{y}(\eta^{0y,\square}))-f(\eta)]^2\right)\\
\leq C(L,\rho)\Biggl[\sum_{y\in\Z^d\setminus\lbrace 0\rbrace}\  \sum_{z\sim y} \mu_0\left(c_{yz}(\eta)[f(\eta^{xy})-f(\eta)]^2\right)\\
\qquad+\sum_{y\sim 0}\mu_0\left(c_{xy}(\eta)[y_i+f(\tau_{y}(\eta^{0y}))-f(\eta)]^2\right)\Biggr].
\end{multline}
\end{lemma}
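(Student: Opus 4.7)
The plan is a standard canonical-path comparison of Dirichlet forms. Fix $\eta\in A$ with $\bar\eta_{0,y}=1$; for concreteness take $y=(L+2)e_1$, the other directions being symmetric. Using Lemmas \ref{moves1}--\ref{moves5} in succession we construct a deterministic allowed path $\eta^{(1)}=\eta,\eta^{(2)},\ldots,\eta^{(N)}=\tau_y(\eta^{0y,\square})$ as follows. Lemma \ref{moves1} empties the interior boundary of every facilitating block in $\mathcal N_{0,1}$ (at most $c(d)2^{L^d}$ moves), so that every tube in $\mathcal N_{0,1}$ becomes wrapped by zeros. Lemma \ref{moves2} then lets us reposition zeros freely within these tubes; iterating Lemmas \ref{moves3}--\ref{moves5} we swap the slices $\{k\}\times\{0,1\}^{d-1}$ and $\{k+1\}\times\{0,1\}^{d-1}$ for $k=0,\dots,L+1$ while transporting the tracer from $0$ to $y$ through the liaison tube $B^{(1)}_1+2e_1$, thereby realising the required exchange of the two end-boxes. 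Finally we reverse the framing moves of Lemma \ref{moves1} to restore the facilitating blocks to their original content. The total length satisfies $N\leq N_0(d,L)$, and every intermediate configuration agrees with $\eta$ outside a bounded region $R\subset\mathcal N_{0,1}\cup(y+\{0,1\}^d)$ with $|R|\leq c(d)L^d$. We label each step by a bond $b_k=(x_k,z_k)$, either \emph{bulk} ($x_k,z_k\neq 0$) or \emph{tracer} ($x_k=0$, $z_k\sim 0$).

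Next we telescope. Because the successive tracer displacements along the path sum to $y$, we have $\sum_{k\,\text{tracer}}(z_k)_i=y_i$, so
\[
y_i+f(\tau_y(\eta^{0y,\square}))-f(\eta)=\sum_{k=1}^{N-1}\Delta_k(\eta),
\]
with $\Delta_k(\eta)=f(\eta^{(k+1)})-f(\eta^{(k)})$ at a bulk step and $\Delta_k(\eta)=(z_k)_i+f(\eta^{(k+1)})-f(\eta^{(k)})$ at a tracer step. Cauchy--Schwarz then gives the pointwise bound $[y_i+f(\tau_y(\eta^{0y,\square}))-f(\eta)]^2\leq N\sum_k\Delta_k(\eta)^2$.

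The last step is a congestion / change-of-variables argument. For fixed $k$, we group $\sum_\eta\mu_A(\eta)\bar\eta_{0,y}\,\Delta_k(\eta)^2$ by the bond $b_k(\eta)=(x,z)$ and by the intermediate configuration $\zeta=\eta^{(k)}(\eta)$. The preimage $\{\eta:b_k(\eta)=(x,z),\,\eta^{(k)}=\zeta\}$ has cardinality at most $2^{|R|}\leq 2^{c(d)L^d}$ since every such $\eta$ agrees with $\zeta$ outside $R$, and the Radon--Nikodym factor satisfies
\[
\frac{\mu_A(\eta)}{\mu_0(\zeta)}=\frac{1}{(1-\rho)^{2^d-1}}\cdot\frac{\mu(\eta)}{\mu(\zeta)}\leq K(L,\rho),
\]
where $K(L,\rho)$ depends on $d,L,\rho$ only through $|R|$. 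By construction the KA rate at step $k$ satisfies $c_{xz}(\zeta)=1$, so a bulk step contributes at most $K(L,\rho)2^{c(d)L^d}\,\mu_0\bigl(c_{xz}[f(\zeta^{xz})-f(\zeta)]^2\bigr)$, while a tracer step analogously contributes a term of the form $\mu_0\bigl(c_{0z}[z_i+f(\tau_z\zeta^{0z})-f(\zeta)]^2\bigr)$, each matching a summand on the right-hand side of the claimed inequality. Summing over $k=1,\dots,N-1$ and multiplying by the Cauchy--Schwarz factor $N$ yields the lemma with $C(L,\rho)=N^2\,K(L,\rho)\,2^{c(d)L^d}$. The main obstacle in implementing this plan is fixing the canonical path deterministically enough that the multiplicity bound on $\eta\mapsto\eta^{(k)}$ is uniform in $\eta$ and that each step $k$ corresponds to a KA-allowed bond of the correct type (bulk vs.\ tracer), so as to line up cleanly with a single summand on the right; once the algorithm is pinned down, all remaining estimates are elementary since $|R|$ and $N$ depend only on $d$ and $L$.
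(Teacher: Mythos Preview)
Your proof is correct and follows essentially the same canonical-path comparison as the paper: build the allowed path via Lemmas~\ref{moves1}--\ref{moves5}, telescope, apply Cauchy--Schwarz, then control the congestion by a change of variables. The only notable difference is in the bookkeeping of the measure change: the paper observes that since every move along the path is a particle exchange, $\eta$ and the intermediate configuration $\eta'$ have the \emph{same} number of particles, hence $\mu_0(\eta)=\mu_0(\eta')$ exactly, and the only Radon--Nikodym factor is the $(1-\rho)^{1-2^d}$ from passing between $\mu_A$ and $\mu_0$. Your bound $\mu_A(\eta)/\mu_0(\zeta)\leq K(L,\rho)$ is of course valid but overstates the dependence on $L$; in fact $\mu(\eta)=\mu(\zeta)$ for the same reason, so $K$ does not depend on $|R|$. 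Similarly, the paper bounds the multiplicity by the path length $N$ (using that paths do not revisit configurations) rather than by $2^{|R|}$, yielding the sharper constant $(1-\rho)^{1-2^d}(C'2^{L^d})^3$; your $N^2\,K\,2^{c(d)L^d}$ is coarser but equally sufficient for the lemma.
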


 \begin{proof}[Proof]
Due to Lemmata~\ref{moves1}, \ref{moves2}, \ref{moves3}, \ref{moves4}, \ref{moves5}, we know that for all $\eta$ such that $\bar\eta_{0,y}=1$ and $\eta\in A$, there exists an allowed path from  $\eta$ to $\tau_y\left(\eta^{0y,\square}\right)$ of length upper bounded by $C'2^{L^d}$ for some finite constant $C'$. In Figure~\ref{fig:path}, we give the main steps in the construction of such a path. 
In particular, $\sum_{k=0}^{N-1}\mathbf{1}_{x^{(k)}=0}y^{(k)}_i=y_i$.

\begin{figure}
\begin{center}
\includegraphics[scale=.4]{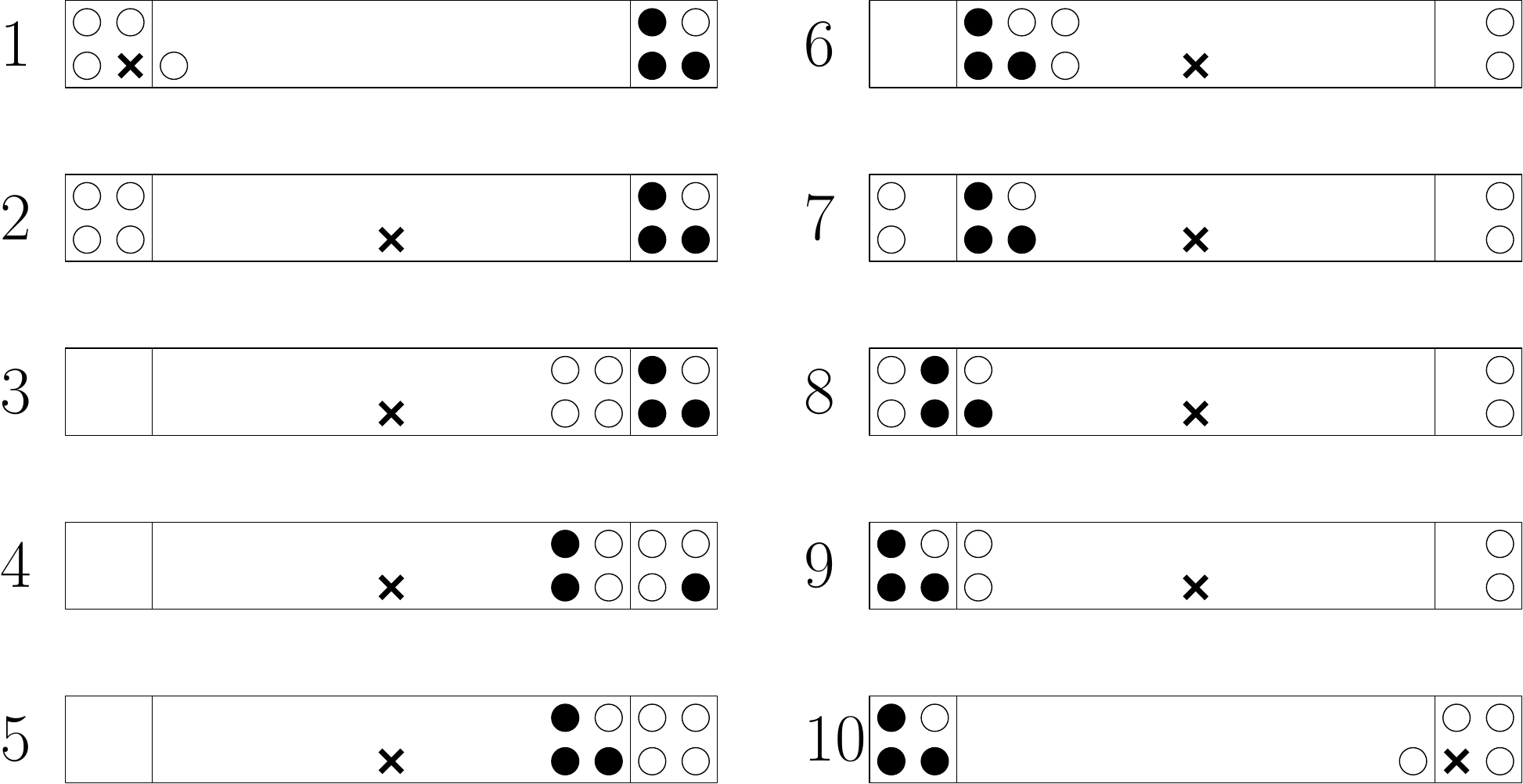}
\caption{An example of the main steps in the construction of the path from $\eta$ (in line 1) to $\tau_{y}(\eta^{0y,\square})$ (in line 10) when $y=(L+2)e_1$. Only the liaison tube is represented. From line 1 to 2 we use Lemma~\ref{moves3}; line 2 to 3: Lemmata~\ref{moves4} and \ref{moves2} twice; line 3 to 4: Lemmata~\ref{moves4} and \ref{moves2}; line 4 to 5: Lemmata~\ref{moves5}, \ref{moves4}; line 5 to 6: Lemmata~\ref{moves4}, \ref{moves2}; line 6 to 7: Lemmata~\ref{moves2}, \ref{moves4}, \ref{moves5}; line 7 to 8: Lemmata~\ref{moves4}, \ref{moves2}; line 8 to 9: Lemmata~\ref{moves5}, \ref{moves4}; from 9 to 10, Lemmata~\ref{moves2}, \ref{moves4}, \ref{moves3}.}
\label{fig:path}
\end{center}
\end{figure}

Then we can write 
\begin{eqnarray}
y_i+f(\tau_{y}(\eta^{0y,\square}))-f(\eta)=\sum_{k=0}^{N-1}\left[\mathbf{1}_{x^{(k)}=0}y^{(k)}_i+f(\eta^{(k+1)})-f(\eta^{k)})\right].
\end{eqnarray}
By Cauchy-Schwarz inequality, we deduce that
\begin{eqnarray}
[y_i+f(\tau_{y}(\eta^{0y,\square}))-f(\eta)]^2\leq C'2^{L^d}\sum_{k=0}^{N-1}c_{x^{(k)}y^{(k)}}(\eta^{(k)})\left[\mathbf{1}_{x^{(k)}=0}y^{(k)}_i+f(\eta^{(k+1)})-f(\eta^{k)})\right]^2.
\end{eqnarray}
Therefore,
\begin{multline}
\mu_A\left(\bar\eta_{0,y}[y_i+f(\tau_{y}(\eta^{0y,\square}))-f(\eta)]^2\right)= (1-\rho)^{1-2^d} \mu_0\left(\mathbf{1}_A\bar\eta_{0,y}[y_i+f(\tau_{y}(\eta^{0y,\square}))-f(\eta)]^2\right)\\
\leq (1-\rho)^{1-2^d}C'2^{L^d}\mu_0\left(\bar{\eta}_{0,y}\sum_{k=0}^{N-1}c_{x^{(k)}y^{(k)}}(\eta^{(k)})\left[\mathbf{1}_{x^{(k)}=0}y^{(k)}_i+f(\eta^{(k+1)})-f(\eta^{k)})\right]^2\right)\\
\leq(1-\rho)^{1-2^d}C'2^{L^d}\left\{\sum_{z\sim x\neq 0,\eta,\eta'}\mu_0(\eta)\bar{\eta}_{0,y}\sum_{k=0}^{N-1}\mathbf{1}_{x^{(k)}=x,y^{(k)}=z,\eta^{(k)}=\eta'}c_{xz}(\eta')\left[f(\eta'^{xz})-f(\eta')\right]^2,\right.\\
+\left.\sum_{z\sim 0,\eta,\eta'}\mu_0(\eta)\bar{\eta}_{0,y}\sum_{k=0}^{N-1}\mathbf{1}_{x^{(k)}=0,y^{(k)}=z,\eta^{(k)}=\eta'}c_{xz}(\eta')\left[z_i+f(\tau_z\eta'^{xz})-f(\eta')\right]^2\right\},
\end{multline}
where the sums are taken over $x\sim z$ inside $\mathcal{N}_{0,i}$, $\eta,\eta'\in\lbrace 0,1\rbrace^{\mathcal{N}_{0,i}}$ with the same number of zeros, and the equality $\eta^{(k)}=\eta'$ actually means $\eta^{(k)}=\tau_{Y_k}\eta'$, where $Y_k=\sum_{m=0}^{k-1}y^{(j)}$. Since $\eta$ and $\eta'$ have the same number of zeros and the tracer at zero by construction, $\mu_0(\eta)=\mu_0(\eta')$ and we can bound $\bar{\eta}_{0,y}\sum_{k=0}^{N-1}\mathbf{1}_{x^{(k)}=0,y^{(k)}=z,\eta^{(k)}=\eta'}$ by $N\leq C'2^{L^d}$ to obtain
\begin{multline}
\mu_A\left(\bar\eta_{0,y}[y_i+f(\tau_{y}(\eta^{0y,\square}))-f(\eta)]^2\right)\\
\leq (1-\rho)^{1-2^d}(C'2^{L^d})^3\Biggl[\sum_{x\neq 0}\  \sum_{z\sim x} \mu_0\left(c_{xz}(\eta)[f(\eta^{xz})-f(\eta)]^2\right)\\
\qquad+\sum_{z\sim 0}\mu_0\left(c_{0z}(\eta)[z_i+f(\tau_{z}(\eta^{0z}))-f(\eta)]^2\right)\Biggr].
\end{multline}
\end{proof}

Finally, we can conclude.
\begin{proof}[Proof of Theorem \ref{Theo:compa}]
The result follows from Lemma \ref{lemmacomparison}, Lemma \ref{ultimo} and the variational formula for $D$ in Proposition \ref{varD}.
\end{proof}

\end{document}